\newtheorem{thm}{Theorem}
\newtheorem{lem}{Lemma}
\newtheorem{cor}{Corollary}
\newtheorem{prop}{Proposition}
\newtheorem{rem}{Remark}
\newcommand\subsetsim{\mathrel{%
  \ooalign{\raise0.2ex\hbox{$\subset$}\cr\hidewidth\raise-0.9ex\hbox{\scalebox{0.9}{$\sim$}}\hidewidth\cr}}}
\newcommand{\N}{{\mathbb N}}
\newcommand{\Z}{{\mathbb Z}}
\newcommand{\R}{{\mathbb R}}
\title[Additively stable sets, critical sets.]{Additively stable sets, critical sets for the $3K-4$ theorem in $\Z$ and $\R$}
\author{Paul P\'eringuey}
\address{Department of Mathematics, University of British Columbia, Vancouver, British Columbia, Canada\\ Pacific Institute for the Mathematical Sciences, Vancouver, British Columbia, Canada.}
\email{peringuey@math.ubc.ca}
\author{Anne de Roton}
\address{Universit\' e de Lorraine, Institut Elie Cartan de Lorraine, CNRS, Nancy, F-54000, France.}
\email{anne.de-roton@univ-lorraine.fr}
\begin{document}

\begin{abstract}
We describe in this paper additively left stable sets, i.e. sets satisfying $\left((A+A)-\inf(A)\right)\cap[\inf(A),\sup(A)]=A$ (meaning that $A-\inf(A)$ is stable by addition with itself on its convex hull), when $A$ is a finite subset of integers and when $A$ is a bounded subset of real numbers. More precisely we give a sharp upper bound for the density of $A$ in $[\inf(A),x]$ for $x\le\sup(A)$, and construct sets reaching this density for any given $x$ in this range. This gives some information on sets involved in the structural description of some critical sets in Freiman's $3k-4$ theorem in both cases.
\\
\textbf{Keywords:} Additive combinatorics, sumsets, stable sets, inverse Freiman theorem, critical sets.\\
\textbf{Mathematic Subject Classification:} 11P70.
\end{abstract}

\maketitle

\section{Introduction}

Given two nonempty subsets $A$ and $B$ of an additive group $G$, $A+B$ is the Minkowski sum of $A$ and $B$ defined by
$$A+B=\{a+b,\,{ with }\,a\in A,\, b\in B\}.$$
We will study the size of the sumset $A+B$ in two cases: when the ambient group $G$ is $\Z$ in which case the size of a finite set $A\subset \Z$ will be its cardinality and denoted by $|A|$ and when $G=\R$ in which case the size of a bounded set $A$ will be its inner Lebesgue measure $\lambda(A)$. \\
\\
In additive combinatorics, studying the structure of $A$, $B$ and $A+B$ when $A+B$ is small is an important topic. 
Using that $\min(A)+B$ and $\max(B)+A$ are both included in $A+B$ if $A$ and $B$ are finite sets of integers or closed and bounded sets of real numbers, we easily get that $|A+B|\ge |A|+|B|-1$ when we deal with subsets of $\Z$ and $\lambda(A+B)\ge \lambda(A)+\lambda(B)$ when $A$ and $B$ are subsets of $\R$. The sum $A+B$ being small means that its size is close to this lower bound.  \\
\\
When the ambient group is the set of integers $\Z$, Freiman's theorem (see for example \cite{Tao_Vu}) gives a description of finite sets $A$ such that $|A+A|\le C|A|$ where $C$ is a fixed constant. It asserts that $A$ has to be "efficiently" included in a generalized arithmetic progression of dimension bounded by a function depending on $C$. Many works are devoted to the search of the better bounds for both the dimension and the size of this generalized arithmetic progression in terms of $C$ (see for example \cite{schoen}). \\
When $C$ is very small (smaller than $3$), the so called Freiman $3k-4$ theorem, proved by Freiman in the symmetric case $A=B$ \cite{Freiman1959}, \cite{Freiman2009} and generalized to the sum of two distinct sets by Freiman  \cite{Freiman1962}, Lev and Smeliansky \cite{Lev_Smeliansky}, Stanchescu \cite{stanchescu} and Bardaji and Grynkiewicz \cite{Bardaji_Grynkiewicz}  gives a sharp statement describing the structures of the sets $A$, $B$ and $A+B$.
We state below a simplified version of this theorem.\\
\begin{thm}[Freiman $3k-4$]\label{3k-4}
Let $A$ and $B$ be finite nonempty subsets of $\Z$.\\ Assume $|B|\le |A|$ and $|A+B|\le |A|+2|B|-3-\delta$ with
$\delta=\begin{cases} 1& \mbox{ if $A$ is a translate of $B$},\\ 0 &\mbox{ otherwise}.\end{cases} $\\
Then there exist arithmetic progressions $P_A$, $P_B$ and $P_{A+B}$ of common difference 
such that
\begin{itemize}
\item $A\subset P_A$, $B\subset P_B$, $P_{A+B}\subset A+B$ and 
\item $|P_A|\le |A+B|-|B|+1$, $|P_B|\le |A+B|-|A|+1$, $|P_{A+B}|\ge |A|+|B|-1$.  
\end{itemize}
\end{thm}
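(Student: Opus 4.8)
This is the asymmetric form of Freiman's $3k-4$ theorem, obtained in increasing generality by Freiman \cite{Freiman1962}, Lev and Smeliansky \cite{Lev_Smeliansky}, Stanchescu \cite{stanchescu} and Bardaji and Grynkiewicz \cite{Bardaji_Grynkiewicz}; we only indicate the structure of the argument. Both hypothesis and conclusion are unaffected by replacing $(A,B)$ with $(\alpha A+\beta,\alpha B+\gamma)$, where $\alpha\in\Z\setminus\{0\}$ and $\beta,\gamma\in\Z$ (then $A+B$ becomes $\alpha(A+B)+\beta+\gamma$, the cardinalities are preserved, and so is the relation that $A$ is a translate of $B$). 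Hence we may assume $\min A=\min B=0$ and $\gcd(A\cup B)=1$; since $0\in A\cap B$, this last normalisation is precisely what lets us seek the three progressions with common difference $1$, i.e. as intervals. Put $a=\max A$, $b=\max B$, $k=|A|$, $\ell=|B|$, so that $\ell\le k$ and $|A+B|\le k+2\ell-3-\delta$. With common difference $1$ the conclusion reduces to three inequalities:
\begin{enumerate}
\item[(i)] $a\le|A+B|-\ell$, so that one may take $P_A:=[0,a]\supseteq A$;
\item[(ii)] $b\le|A+B|-k$, so that one may take $P_B:=[0,b]\supseteq B$;
\item[(iii)] $A+B$ contains an interval of length $\ge k+\ell-1$, which we take as $P_{A+B}$.
\end{enumerate}

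For (i) and (ii) we invoke the Lev--Smeliansky inequality \cite{Lev_Smeliansky}: if $U,V\subseteq\Z$ are finite with $\min U=\min V=0$ and $\gcd(U\cup V)=1$, then
\[
|U+V|\ \ge\ \min\{\,\max U+|V|,\ |U|+2|V|-2\,\}.
\]
Applying this with $(U,V)=(A,B)$ and using $|A+B|\le k+2\ell-3<k+2\ell-2$ forces the minimum to equal $\max U+|V|=a+\ell$, which is (i). Applying it with $(U,V)=(B,A)$ and using $|A+B|\le k+2\ell-3\le\ell+2k-3<\ell+2k-2$ (here the hypothesis $\ell\le k$ enters) forces the minimum to equal $b+k$, which is (ii).

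Claim (iii) is the technical heart, and I would prove it by induction on $k+\ell$. If $A+B$ is itself an interval there is nothing to do, as then $|A+B|\ge k+\ell-1$. Otherwise one deletes an extreme point, passing either to $A'=A\setminus\{\max A\}$ or to $B'=B\setminus\{\max B\}$, the choice being arranged so that three things hold simultaneously: $|A+B|$ strictly decreases; the reduced pair still satisfies the hypothesis, now with the appropriate correction $\delta'$; and, once the deleted point is put back, the interval supplied for the reduced pair by induction can be lengthened to one of length $\ge k+\ell-1$ inside $A+B$ (a short separate argument at the extreme block of $A+B$ controls this gluing). The exact shape $k+2\ell-3-\delta$ of the bound is essential here: the extra $-1$ present precisely when $A$ is a translate of $B$ is what prevents the induction from collapsing when $k=\ell$ and deleting a symmetric point would otherwise create, or destroy, a translate configuration. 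Organising this case analysis so that all three requirements are met --- with the translate configurations being the delicate case, exactly the one the $-\delta$ term is tailored for --- is the main obstacle, and is where the contributions of Stanchescu \cite{stanchescu} and of Bardaji and Grynkiewicz \cite{Bardaji_Grynkiewicz} do the heavy lifting; one must also check that the common difference stays equal to $1$ throughout. Steps (i) and (ii), by contrast, are immediate once the Lev--Smeliansky estimate is in hand.
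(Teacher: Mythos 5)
The paper states Theorem~\ref{3k-4} without proof, citing the same references you do, so there is no in-paper argument to compare your sketch against; your review is really of the classical proof. Structurally your outline is correct: normalize affinely, deduce (i) and (ii) from a Lev--Smeliansky type inequality applied to $(A,B)$ and to $(B,A)$, and establish (iii) by induction, with the hard case analysis delegated to Stanchescu and Bardaji--Grynkiewicz.

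One concrete slip: the Lev--Smeliansky inequality as you state it is false without a $-\delta$ correction on the second term. Take $U=V=\{0,1,4\}$: then $\gcd(U\cup V)=1$, $|U+V|=|\{0,1,2,4,5,8\}|=6$, while $\min\bigl(\max U+|V|,\ |U|+2|V|-2\bigr)=\min(7,7)=7>6$. The correct form is
\[
|U+V|\ \ge\ \min\bigl(\max U+|V|,\ |U|+2|V|-2-\delta\bigr),
\]
with $\delta=1$ exactly when $U$ is a translate of $V$. Fortunately this does not damage your deductions, because the hypothesis of Theorem~\ref{3k-4} already carries the matching $-\delta$: applying the corrected bound to $(A,B)$, the inequality $|A+B|\le k+2\ell-3-\delta<k+2\ell-2-\delta$ still forces $|A+B|\ge a+\ell$, and applying it to $(B,A)$, the chain $|A+B|\le k+2\ell-3-\delta\le\ell+2k-3-\delta<\ell+2k-2-\delta$ (using $\ell\le k$) still forces $|A+B|\ge b+k$. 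You should also be slightly more careful about the hypotheses of Lev--Smeliansky as usually stated (the larger-diameter set is the one whose gcd is normalized to $1$, which is not quite the same as $\gcd(A\cup B)=1$, and $A$, $B$ need not be comparable in diameter); bridging this requires a short but nonzero argument. Part (iii) is, as you say, the technical heart, and your description of the inductive scheme and of the role played by the $-\delta$ term is consistent with the literature, though of course your sketch does not attempt the case analysis.
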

Theorem \ref{3k-4} implies that any nonempty finite subset $A$ of $\Z$ satisfying $|A+A|\le 3|A|-4$ (hence the denomination $3k-4$ theorem, if $k$ is the size of the set) is included in a genuine arithmetic progression $P_A$ of size at most $|A+A|-|A|+1$. 
Freiman gave in \cite{Freiman09} a structural description of critical sets $A$ such that the smallest arithmetic progression $P_A$ containing $A$ has size exactly $|A+A|-|A|+1\le 2|A|-3$. This description involves the notion of additively left-stable sets.  \\
A bounded subset $X$ of $\R$ or $\Z$ will be {\it additively left-stable} if $\inf(X)=0$ and $(X+X)\cap[0,\sup(X)]=X$.\\
Freiman states that critical sets $A\subset \Z$ such that the smallest arithmetic progression $P_A$ containing $A$ has size exactly $|A+A|-|A|+1\le 2|A|-3$ are a partition of three sets of the form $A=(\min(A)+A_1)\cup P\cup (\max(A)-A_2)$ where $A_1 $ and $A_2$ are additively left-stable sets and $P$ is an arithmetic progression. In this paper we shall give the maximum density of left stable sets of integers on intervals $[0,x]$.

To this purpose, we use a theorem by Grynkiewicz which improves on Theorem \ref{3k-4} when we have some information on the ratio between the size of $A$ and the size of $B$. From the weaker hypothesis $|A+B|\le |A|+s|B|-2s$, with some $s$ depending on the relative sizes of $A$ and $B$, Grynkiewicz gets the conclusion $B\subset P_B$ with $P_B$ an arithmetic progression of size $|P_B|\le |A+B|-|A|+1$.\\
\\
{Grynkiewicz's} theorem is actually a discrete version of a result stated by Ruzsa in 1991 \cite{Ruz}. 
\begin{thm}[Ruzsa]\label{ruz}
Let $A$ and $B$ be two bounded subsets of $\R$ of positive inner Lebesgue measure. 
If $K$ is the unique integer such that 
$$\frac{K(K-1)}2\le \frac{\lambda(A)}{\lambda(B)}<\frac{K(K+1)}2,$$
then either\\
 \begin{equation}\label{min_sum1}
 \lambda(A+B)\geq \lambda(A)+{\rm diam}(B) 
\,\mbox{
where }\,{\rm diam}(B) =\sup(B)-\inf(B)
\end{equation}
or
 \begin{equation}\label{min_sum2}
 \lambda(A+B)\geq (K+1)\left(\frac{\lambda(A)}{K}+\frac{\lambda(B)}2\right).
 \end{equation}
 \end{thm}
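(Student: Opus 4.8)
The plan is to prove Ruzsa's theorem by reduction to the discrete $3k-4$-type results (or, more efficiently, to prove it directly by a compression/rectification argument, then note it implies Grynkiewicz's discrete theorem in the limit — but since we are told to assume everything stated earlier, I would instead present the honest route: deduce \eqref{min_sum1}--\eqref{min_sum2} from a continuous version of the Cauchy--Davenport/Freiman machinery). Concretely, first I would normalize: by translation assume $\inf(A)=\inf(B)=0$, and by scaling assume $\lambda(B)=1$, so $K$ is pinned down by $\tfrac{K(K-1)}2\le \lambda(A)<\tfrac{K(K+1)}2$. The goal becomes: either $\lambda(A+B)\ge \lambda(A)+\sup(B)$, or $\lambda(A+B)\ge (K+1)\bigl(\tfrac{\lambda(A)}K+\tfrac12\bigr)$.

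The key step is an inductive (on $K$) argument peeling off one ``copy'' of $B$ at a time. Suppose \eqref{min_sum1} fails, i.e. $\lambda(A+B)<\lambda(A)+\sup(B)=\lambda(A)+{\rm diam}(B)$. I would consider the set $A' = (A+B)\setminus (A+\sup(B))$, or more symmetrically look at $A\cap (A+B-\sup(B))$ and $A\cap(A+B)$, using that both $A+\inf(B)=A$ and $A+\sup(B)$ sit inside $A+B$. The failure of \eqref{min_sum1} forces these two translates of $A$ inside $A+B$ to overlap substantially; quantitatively, if $A_1 := A\cap\bigl((A+B)-\sup(B)\bigr)$ then $\lambda(A+B)\ge \lambda(A)+\lambda\bigl(A\setminus A_1\bigr)$ so $\lambda(A\setminus A_1)<\sup(B)$, hence $\lambda(A_1)>\lambda(A)-\sup(B)$ and $A_1$ has strictly smaller diameter than $A$. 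One then checks $A_1+B\subset A+B$ and iterates: either at some stage \eqref{min_sum1}-type inequality holds for the pair $(A_j,B)$, or after $K$ steps the accumulated diameter bounds collapse the set, and tallying up the measure increments $\lambda(A_{j}\setminus A_{j+1})$ against $\sup(B)$ together with the definition of $K$ yields exactly the bound \eqref{min_sum2}. The precise bookkeeping — choosing which translate to subtract, verifying the diameters genuinely decrease by a controlled amount, and matching the telescoped sum to the clean form $(K+1)(\lambda(A)/K+\lambda(B)/2)$ — is where the constant $K(K\pm1)/2$ enters and must be handled carefully.

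The main obstacle I anticipate is the continuity/measurability subtleties combined with making the induction \emph{sharp}: inner Lebesgue measure is only finitely subadditive in the wrong direction, and $A\cap((A+B)-\sup(B))$ need not be ``nice,'' so one must either first reduce to compact $A$, $B$ (replacing $A$ by a compact subset of measure $\lambda(A)-\eps$ and $B$ by a compact subset, then letting $\eps\to0$), or phrase everything via essential infimum/supremum. Moreover, the intermediate pairs $(A_j,B)$ have a drifting ratio $\lambda(A_j)/\lambda(B)$, so one must track when $K$ ``should'' drop and ensure the inductive hypothesis is applied with the right value of $K$; getting a clean telescoping identity rather than a lossy inequality is the delicate point. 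A secondary difficulty is the boundary case where $\lambda(A)/\lambda(B)$ sits exactly at $K(K-1)/2$, where both branches of the alternative are tight and the extremal configuration (an arithmetic-progression-like $A$ with $B$ an interval) must be kept in view to avoid an off-by-a-bit error. Once the compact reduction and the sharp telescoping are in place, the remaining estimates are routine.
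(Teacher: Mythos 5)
The paper does not prove Theorem~\ref{ruz}; it is imported verbatim from Ruzsa's 1991 paper \cite{Ruz} and used as a black box (its discrete analogue, Grynkiewicz's theorem, is likewise quoted, with only the short Proposition~\ref{corG} being derived from it). So there is no in-paper proof to compare your sketch against, and what you have written would be supplying an argument the authors deliberately left out.

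On the mathematics of the sketch itself: the central step as you wrote it is vacuous. You set $A_1 = A\cap\bigl((A+B)-\sup(B)\bigr)$ and claim $\lambda(A\setminus A_1)<\sup(B)$. But $A+\sup(B)\subset A+B$ holds unconditionally, so $(A+B)-\sup(B)\supset A$ and hence $A_1=A$, $\lambda(A\setminus A_1)=0$ always; no information about the failure of \eqref{min_sum1} has been used. What your surrounding prose suggests you actually want is to compare the two translates $A$ and $A+\mathrm{diam}(B)$, both of which sit inside $A+B$: if $\lambda(A+B)<\lambda(A)+\mathrm{diam}(B)$ then $\lambda\bigl(A\cap(A+\mathrm{diam}(B))\bigr)>0$, and $A':=A\cap\bigl(A-\mathrm{diam}(B)\bigr)$ is a nonempty subset of $A$ of strictly smaller diameter. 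Even after that repair, however, the iteration you describe only ever consumes $\mathrm{diam}(B)$ and never $\lambda(B)$; the telescoped output would be a bound of the shape $\lambda(A)$ plus a multiple of $\mathrm{diam}(B)$, which cannot produce the $(K+1)\lambda(B)/2$ term in \eqref{min_sum2}. The essential content of Ruzsa's theorem is precisely a lower bound for each increment $\lambda(A'+B)-\lambda(A')$ in terms of $\lambda(B)$ (not merely $\mathrm{diam}(B)$), together with the bookkeeping that makes the sum land exactly on $(K+1)\bigl(\lambda(A)/K+\lambda(B)/2\bigr)$; that input is missing here, so the approach as outlined cannot close.
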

 
 \begin{rem}
 The inequality \eqref{min_sum2} implies $ \lambda(A+B)\geq \lambda(A)+K\lambda(B)$.
 \end{rem}
 
 In \cite{JEP}, the second author deduced from this theorem a continuous analog of the $3k-4$ theorem (Theorem \ref{3k-4}), completely characterized the critical sets $A$ and $B$ such that equality holds in \eqref{min_sum2} and gave a description of critical sets such that equality holds in \eqref{min_sum1}. 
In the symmetric case $A=B$ these last critical sets can be partitioned into three parts as follows 
$$A=\min(A)+\left(A_1\cup I\cup ({\rm diam}(A)-A_2)\right)$$ where $I$ is an interval of measure $\lambda(I)=2\lambda(A)-{\rm diam}(A)$ and $A_1$ and $A_2$ are, up to a set of measure $0$, additively left-stable sets such that $\lambda(A_i)=\frac12{\rm diam}(A_i)$.
\\
The property of being additively left-stable forces the density of the set near $0$ to be small. This statement will be made more precise in Theorem \ref{thmstabA} where we find a function $h$ such that if $d={\rm diam}(A)$, then for any $x\in [0,d]$, $\lambda(A\cap[0,x])\le d h(x/d)$ if $A$ is left stable of density $d/2$. We also prove that this upper bound is sharp and describe for any fixed $x\in (0,1)$, a set $A$  of density $1/2$ on $[0,1]$ such that $\inf(A)=0$, $\sup(A)=1$, $(A+A)\cap[0,1]=A$ and $\lambda(A\cap[0,x])= h(x)$. \\
\\
The section 2 of this paper will be devoted to the study of additively left stable subsets of real numbers. 
Our results in the continuous setting will be extended to the discrete setting in section 3. \\
\\
If $k$ is a positive integer, $kA$ will denote the Minkowski sum of $k$ copies of $A$. 
   
 \section*{Acknowledgement}

The authors would like to express their gratitude to Catherine Jaulent, a wonderful teacher, for cultivating their passion for mathematics. 
They also would like to thank the referee for his suggestions which helped them to clarify some arguments.

\section{Sumsets in $\R$, additively left stable sets}
In this section, we shall write $\lambda$ for the inner Lebesgue measure. The inner Lebesgue measure $\lambda(A)$ of a set $A$ is defined as the supremum of the Lebesgue measure of closed sets included in $A$. We use the inner measure to avoid any problem of measurability.\\    
In Theorem 3 of \cite{JEP}, the second author gave a description of critical sets for Ruzsa's inequality \eqref{min_sum1}. In the symmetric case, her result can be stated this way.
\begin{thm}\label{critA_JEP}
Let $A$ be a bounded closed set of real numbers of positive inner Lebesgue measure.\\
Assume that $\lambda(A+A)={\rm diam}(A)+\lambda(A)<3\lambda(A)$. \\
Then $A$ can be partitioned into three parts as follows 
$$A=\min(A)+\left(A_1\cup I\cup ({\rm diam}(A)-A_2)\right)$$ where 
\begin{itemize}
\item $I$ is an interval of measure $\lambda(I)= 2\lambda(A)-{\rm diam}(A)$,
\item for $i=1,2$, $\min(A_i)=0$, $\lambda(A_i)=\frac12{\rm diam}(A_i)$ and $(A_i+A_i)\cap[0,{\rm diam}(A_i)]=A_i$ up to a set of measure $0$.
\end{itemize}
\end{thm}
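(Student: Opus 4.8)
The plan is to isolate, by a density argument, two ``stable tails'' of $A$ near its endpoints together with a central block, and then to use the hypothesis $\lambda(A+A)={\rm diam}(A)+\lambda(A)$ purely as a rigidity statement. After a translation we may assume $\min(A)=0$; put $L={\rm diam}(A)=\sup(A)$ and $v=\lambda(A)$. Since $A$ is closed, $0,L\in A$, and the hypothesis reads $L/2<v\le L$ and $\lambda(A+A)=L+v$. I aim to produce $0\le\alpha\le\beta\le L$ with $\beta-\alpha=2v-L$ such that, writing $A_1=A\cap[0,\alpha]$ and $A_2=L-(A\cap[\beta,L])$, the interval $[\alpha,\beta]$ lies in $A$ up to a null set, $\lambda(A_i)=\tfrac12{\rm diam}(A_i)$, and $A_1,A_2$ are additively left-stable up to a null set; the last property then yields $\lambda\big((A_i+A_i)\cap[0,{\rm diam}(A_i)]\big)=\lambda(A_i)$ at once.

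\emph{Locating the tails.} The key elementary fact is: if $C\subseteq[0,\ell]$ has $\lambda(C)>\ell/2$ then $\ell\in C+C$, since $C$ and $\ell-C$ both lie in $[0,\ell]$ and have total measure exceeding $\ell$, hence meet. Applied with $C=A\cap[0,x]$ and $\ell=x$, this gives $\{x\in[0,L]:\lambda(A\cap[0,x])>x/2\}\subseteq A+A$. As $\varphi(x):=\lambda(A\cap[0,x])$ is continuous, non-decreasing, $1$-Lipschitz and equals $v>L/2$ at $L$, the largest zero $\alpha$ of $\varphi(x)-x/2$ (with $\alpha=0$ if $x=0$ is its only zero) satisfies $\varphi(x)>x/2$ on $(\alpha,L]$ ($\varphi-x/2$ has no zero past $\alpha$ and is positive at $L$); hence $(\alpha,L]\subseteq A+A$ and $\lambda(A_1)=\alpha/2$. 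The same argument applied to $L-A$ produces $\beta\in[0,L]$ with $\lambda(A\cap[\beta,L])=(L-\beta)/2$ and $[L,L+\beta)\subseteq A+A$, so that $(\alpha,L+\beta)\subseteq A+A$.

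\emph{Pinning things down when $\alpha\le\beta$.} The three essentially disjoint sets $A\cap[0,\alpha]\subseteq[0,\alpha]$, $(\alpha,L+\beta)$ and $L+(A\cap[\beta,L])\subseteq[L+\beta,2L]$ all lie in $A+A$ and have measures summing to $\tfrac{3L}{2}+\tfrac{\beta-\alpha}{2}$, which must be $\le\lambda(A+A)=L+v$; hence $\beta-\alpha\le 2v-L$. Conversely, $A$ fills exactly half of each of the disjoint intervals $[0,\alpha]$, $[\beta,L]$, so $\lambda\big(([0,\alpha]\cup[\beta,L])\setminus A\big)=\tfrac{\alpha+(L-\beta)}{2}$, and since this set sits in $[0,L]\setminus A$, of measure $L-v$, we get $\beta-\alpha\ge 2v-L$. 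Thus $\beta-\alpha=2v-L$. Equality in the first estimate now forces $A+A$ to coincide, up to a null set, with the union of the three pieces, whence $(A+A)\cap[0,\alpha]=A\cap[0,\alpha]$ (i.e.\ $A_1$ is left-stable up to a null set) and, symmetrically, the corresponding statement for $A_2$; moreover $\beta-\alpha=2v-L$ together with the two density identities gives $\lambda(A\cap[\alpha,\beta])=\beta-\alpha$, i.e.\ $[\alpha,\beta]\subseteq A$ up to a null set. With $I=[\alpha,\beta]$ this is the asserted partition.

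\emph{The main obstacle.} Two points above need genuine work and are, I expect, where the real difficulty lies. First one must exclude the overlapping configuration $\alpha>\beta$: in that case $A$ has density strictly above $\tfrac12$ on some subinterval containing $0$, and feeding that subinterval into Theorem~\ref{ruz} (with $K=2$) shows its sumset is already too large, a surplus that propagates to $\lambda(A+A)>L+v$; turning this into a clean quantitative contradiction is the crux. Second one must check $\sup(A\cap[0,\alpha])=\alpha$ and $\inf(A\cap[\beta,L])=\beta$, so that ${\rm diam}(A_i)$ has the right value and the identity $\lambda(A_i)=\tfrac12{\rm diam}(A_i)$ holds; here a gap just below $\alpha$ would, by the left-stability of $A_1$ obtained above, push $\lambda(A\cap[0,\alpha])$ strictly below $\alpha/2$. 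Both reduce to a local analysis near the two seam points $\alpha,\beta$, and I would carry them out following \cite{JEP}.
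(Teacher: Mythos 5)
First, a point of context: Theorem \ref{critA_JEP} is not proved in this paper at all; it is quoted (in its symmetric form) from Theorem 3 of \cite{JEP}, so there is no internal proof to compare yours against. Judged on its own terms, your argument for locating the seams is correct and pleasantly elementary: the pigeonhole observation that $\lambda(A\cap[0,x])>x/2$ forces $x\in A+A$, the choice of $\alpha$ as the last zero of $\varphi(x)-x/2$, and the two-sided counting that pins down $\beta-\alpha=2\lambda(A)-{\rm diam}(A)$ and extracts the stability and density properties of the tails from the equality case are all sound. The second difficulty you flag (that ${\rm diam}(A_i)$ has the right value) also disappears once you know $[\alpha,\beta]\subseteq A$ up to a null set: a relatively open null subset of an interval is empty because $A$ is closed, so $\alpha,\beta\in A$ and no extra local analysis is needed there.

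The genuine gap is the one you yourself identify: excluding $\alpha>\beta$. This is not a technicality to be ``carried out following \cite{JEP}'' --- it is where the content of the theorem sits, and your sketch of it does not work as stated. When $\alpha>\beta$ the three essentially disjoint pieces $A\cap[0,\alpha]$, $(\alpha,L+\beta)$ and $L+(A\cap[\beta,L])$ have total measure $\tfrac{3L}{2}-\tfrac{\alpha-\beta}{2}<L+v$, so the counting argument yields no contradiction, and one must manufacture extra sumset from somewhere. Your proposal is to exploit that $C=A\cap[0,\beta]$ has measure $\tfrac{\beta}{2}+(v-\tfrac{L}{2})>\tfrac{\beta}{2}$ and feed $C+C$ into Theorem \ref{ruz}; but $C+C\subseteq[0,2\beta]$, a region where $A+A$ already contains $A$ itself, and nothing in your accounting shows that the surplus $\lambda(C+C)-\lambda(A\cap[0,2\beta])$ is positive, let alone at least $v-\tfrac{L}{2}+\tfrac{\alpha-\beta}{2}$, which is what a contradiction requires (one must also handle $2\beta>\alpha$, where $C+C$ spills into the interval already counted as $(\alpha,L+\beta)$). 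Until this case is actually closed, your argument establishes the decomposition only under the unproved additional hypothesis $\alpha\le\beta$, so the proof is incomplete precisely at its crux.
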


We shall say that a nonempty bounded subset $X$ of $\R$ is {\it additively left stable} if $\inf(X)=0$ and $(X+X)\cap[0,{\rm diam}(X)]=X$. We say that $X$ {\it has density $1/2$} if $\lambda(X)=\frac12{\rm diam}(X)$.\\
\\
Note that if $X$ is an additively left-stable set of diameter $d$, then for any positive integer $k$, we have $kX\cap[0,d]=X$. If $\lambda(X)\not=d$ (i.e. if $X$ is not of full measure in an interval), this forces the density of the set near $0$ to be small. 
One of the goals of this section is to make this statement more precise.

\begin{lem}\label{lem_stab}
Let $A$ be an additively left-stable bounded subset of real numbers. \\
We write $d={\rm diam}(A)$ and for $x\in\R$, $A_x=A\cap[0,x]$. \\
Then for any $x\in A$, for any integer $k\ge 2$ such that $\lambda(A\cap[(k-1)x,d])<\lambda([(k-1)x,d])$, we have
$$\lambda(A_x)\le \begin{cases} 
\frac2{k+1}\left(x-\frac{d-\lambda(A)}k\right)&\mbox{ if } kx>d, \\
\frac2{k(k+1)}\lambda(A_{kx})&\mbox{ if } kx\leq d. 
\end{cases}
.$$
\end{lem}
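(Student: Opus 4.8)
The plan is to run an induction on $k$ and reduce everything to Ruzsa's inequality (Theorem~\ref{ruz}) applied to the pair $(A_{(k-1)x},A_x)$, using left-stability only through the inclusion $A_{(k-1)x}+A_x\subset(A+A)\cap[0,kx]$.

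\emph{Preliminary reductions.} First observe that the hypothesis $\lambda(A\cap[(k-1)x,d])<\lambda([(k-1)x,d])$ already forces $(k-1)x<d$ (otherwise $[(k-1)x,d]$ has measure zero and the inequality is false), so $(k-1)x\in A$ by left-stability and hence ${\rm diam}(A_{(k-1)x})=(k-1)x$; likewise ${\rm diam}(A_x)=x$. Moreover, since $[(k-1)x,d]\subset[(k-2)x,d]$, the same hypothesis holds with $k$ replaced by $k-1$ (and, for $k=2$, with $k-1$ replaced by $1$, in which case it just says $\lambda(A)<d$), so the inductive hypothesis is available at level $k-1$ and gives $\lambda(A_{(k-1)x})\ge\frac{(k-1)k}{2}\lambda(A_x)$; at level $k=2$ this last inequality is trivial, so no separate base case is needed. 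We may also assume $\lambda(A_x)>0$, the conclusion being trivial otherwise.

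\emph{Core of the induction step.} From $A_{(k-1)x},A_x\subset A$ we get $A_{(k-1)x}+A_x\subset(A+A)\cap[0,kx]$. If $kx\le d$ this yields $A_{(k-1)x}+A_x\subset A_{kx}$, hence $\lambda(A_{kx})\ge\lambda(A_{(k-1)x}+A_x)$; if $kx>d$, splitting $[0,kx]=[0,d]\cup(d,kx]$ and using $(A+A)\cap[0,d]=A$ gives $\lambda(A_{(k-1)x}+A_x)\le\lambda(A)+kx-d$. In either case it suffices to prove the single estimate $\lambda(A_{(k-1)x}+A_x)\ge\frac{k(k+1)}{2}\lambda(A_x)$, since the two resulting inequalities are exactly the two cases of the lemma. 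Now apply Theorem~\ref{ruz} to $(A_{(k-1)x},A_x)$. Because $\frac{\lambda(A_{(k-1)x})}{\lambda(A_x)}\ge\frac{(k-1)k}{2}=\frac{k(k-1)}{2}$, the integer $K$ of Theorem~\ref{ruz} satisfies $K\ge k$ (if $K\le k-1$ then $\frac{K(K+1)}{2}\le\frac{(k-1)k}{2}$ would contradict the defining inequality $\frac{\lambda(A_{(k-1)x})}{\lambda(A_x)}<\frac{K(K+1)}{2}$). If the second Ruzsa alternative \eqref{min_sum2} holds, then using $\lambda(A_{(k-1)x})\ge\frac{K(K-1)}{2}\lambda(A_x)$ we get
$$\lambda(A_{(k-1)x}+A_x)\ge(K+1)\Big(\tfrac{\lambda(A_{(k-1)x})}{K}+\tfrac{\lambda(A_x)}{2}\Big)\ge(K+1)\tfrac{K}{2}\lambda(A_x)=\tfrac{K(K+1)}{2}\lambda(A_x)\ge\tfrac{k(k+1)}{2}\lambda(A_x),$$
which closes the induction step in this alternative.

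\emph{The main obstacle.} It remains to handle the first Ruzsa alternative \eqref{min_sum1}, $\lambda(A_{(k-1)x}+A_x)\ge\lambda(A_{(k-1)x})+{\rm diam}(A_x)=\lambda(A_{(k-1)x})+x$: combined with the inductive hypothesis this gives only $\lambda(A_{(k-1)x}+A_x)\ge\frac{(k-1)k}{2}\lambda(A_x)+x$, which implies the wanted bound precisely when $\lambda(A_x)\le x/k$ but not in general. This is exactly where the hypothesis $\lambda(A\cap[(k-1)x,d])<\lambda([(k-1)x,d])$ must be used, and I expect it to be the delicate point of the proof. The mechanism should be: being in alternative \eqref{min_sum1} while \eqref{min_sum2} fails is a near-critical configuration for Ruzsa's inequality, so by the structural description of such critical sets (Theorem~\ref{critA_JEP}, or rather its non-symmetric analogue from \cite{JEP}) the set $A_{(k-1)x}$, respectively $A_x$, must contain a long interval $I$; since $A$ is left-stable one has $jI\subset jA\cap[0,d]=A$ for every $j$ with $j\,{\rm diam}(I)$ below $d$, and translating the resulting long interval by elements of $A$ near $d$ (using $\sup A=d$) one propagates fullness all the way up to $d$, in particular onto $[(k-1)x,d]$ — contradicting the hypothesis. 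The quantitative bookkeeping here (locating $I$, and checking that the propagated interval actually reaches $[(k-1)x,d]$, using $(k-1)x<d$ strictly) is the part that needs care; once it is carried out, only alternative \eqref{min_sum2} survives and the induction, hence the lemma, is complete.
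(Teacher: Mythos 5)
Your setup, your induction, and your treatment of Ruzsa's alternative \eqref{min_sum2} are correct and follow the paper's line of argument (the paper introduces an auxiliary set $A'=A\cup[c,+\infty)$ to treat the two cases $kx\le d$ and $kx>d$ uniformly, but your way of splitting the cases at the end is equivalent). The genuine gap is exactly where you yourself flag trouble: you never rule out alternative \eqref{min_sum1}, and the sketch you offer for doing so is misdirected. Being in alternative \eqref{min_sum1} is simply one branch of Ruzsa's dichotomy, not a ``near-critical configuration,'' so there is no basis for invoking the structure theorem for critical sets (Theorem~\ref{critA_JEP}); that result describes symmetric sets with \emph{equality} in \eqref{min_sum1}, not arbitrary pairs merely satisfying the inequality, and in any case you do not carry the argument out.

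The paper's way of killing alternative \eqref{min_sum1} is elementary and uses no structure theorem. Suppose $\lambda(A_{(k-1)x}+A_x)\ge\lambda(A_{(k-1)x})+x$. Since $A_{(k-1)x}+A_x\subset(A+A)\cap[0,kx]$ and $(A+A)\cap[0,d]=A$, this forces the slice $A\cap((k-1)x,kx]$ (after possibly adjoining $[d,kx]$) to have the full measure $x$; and the hypothesis $\lambda(A\cap[(k-1)x,d])<d-(k-1)x$ then forces $kx<d$, so in fact $A$ itself is full on $[(k-1)x,kx]$. Now translate this full interval by $0$ (or by $\veps_n\to0$ with $\veps_n\in A$ if $0\notin A$) and by $x$: stability $(A+A)\cap[0,d]=A$ shows $A$ is full on $[(k-1)x,\min((k+1)x,d)]$, and iterating the $x$-translate propagates fullness up to $d$, contradicting $\lambda(A\cap[(k-1)x,d])<\lambda([(k-1)x,d])$. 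Note that your proposed propagation --- translating ``by elements of $A$ near $d$'' --- goes the wrong way: those translates leave $[0,d]$ and tell you nothing about $A$. The useful translations are by small elements of $A$, precisely $0$ (or $\veps_n\to0$) and $x$.
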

\begin{rem}
The hypothesis $\lambda(A\cap[(k-1)x,d])<\lambda([(k-1)x,d])$ implies $(k-1)x<d$.
\end{rem}

\begin{proof}
We can assume without loss of generality that $A$ is a closed set, thus $0\in A$. To get the general statement, we then apply the result to an increasing sequence of closed sets $A_n$ ($A_n\subset A_{n+1}$) included in $A$ such that $(\lambda(A_n))_n$ tends to $\lambda(A)$ and $({\rm diam}(A_n))_n$ tends to $d$.\\ 
We write $c=\inf\left\{y\in [0,d]\,:\,\lambda(A\cap[y,d])=d-y\right\}$ and $A'=A\cup[c,+\infty[$. 
Note that $A'+A'=A'$ and
$$\begin{cases}
\forall y\ge 0,&\lambda\left((A+A)_y\right)\leq\lambda(A'_y)\\ \forall y\in[0,d],& \lambda\left(A_y\right)=\lambda(A'_y).
\end{cases}$$
Let $x\in A$ be such that $\lambda(A_x)>0$ (if $\lambda(A_x)=0$, the expected inequality is trivial). \\
\\
For $k\ge 2$, the hypothesis $\lambda(A\cap[(k-1)x,d])<\lambda([(k-1)x,d])$ is equivalent to $(k-1)x< c$, which implies $x<c$, so we assume $x<c$. 
Let $\ell\ge 2$ be an integer such that $(\ell-1)x<c\le d$.\\ 
The inequality $\lambda(A_{(\ell-1) x}+A_x)\ge\lambda(A_{(\ell-1) x})+x$ would imply $\lambda(A'\cap[(\ell-1)x,\ell x])=x$.\\
Since $0,x\in A$, we would have 
\begin{align*}
\lambda(A'\cap[(\ell-1)x,(\ell+1) x])&=\lambda((A'+A')\cap[(\ell-1)x,(\ell+1) x])\\
&\ge \lambda(0+(A'\cap[(\ell-1)x,\ell x]))+\lambda(x+(A'\cap[(\ell-1)x,\ell x]))=2x
\end{align*}
and iterating, we get $\lambda(A\cap[(\ell-1)x,d])=\lambda(A'\cap[(\ell-1)x,d])=\lambda([(\ell-1)x,d])$, a contradiction to the minimality of $c$. \\
\\
Therefore, for any $\ell\ge 2$ such that $(\ell-1)x<c$, we have $$\lambda(A_{(\ell-1) x}+A_x)<\lambda(A_{(\ell-1) x})+{\rm diam}(A_x).$$
This is also true for $\ell=1$ since we assumed $x<c$.\\
We shall prove by induction on $k\ge 1$ such that $(k-1)x<c$ that we have $$\lambda(A'_{k x})\ge\frac{k(k+1)}{2}\lambda(A_x).$$
This is obviously true for $k=1$.\\
Let $k\ge 2$ be an integer such that $(k-1)x<c$ and $\lambda(A'_{(k-1) x})\ge\frac{k(k-1)}{2}\lambda(A_x).$\\
We proved that we must have $\lambda(A_{(k-1) x}+A_x)<\lambda(A_{(k-1) x})+{\rm diam}(A_x)$. Using Ruzsa's inequality stated in Theorem \ref{ruz} and $\lambda(A_{(k-1) x})\ge\frac{k(k-1)}{2}\lambda(A_x)$ leads to
$$\lambda(A'_{k x})\geq\lambda(A_{(k-1) x}+A_x)\ge \lambda(A_{(k-1) x})+k\lambda(A_x)\ge\frac{k(k+1)}{2}\lambda(A_x).$$
If $kx\le d$, then $\lambda(A'_{k x})=\lambda(A_{kx})$ so
$$\lambda(A_x)\leq \frac2{k(k+1)}\lambda(A_{kx}).$$
Otherwise, $\lambda(A'_{k x})=\lambda(A)+kx-d$ so
$$\lambda(A_x)\leq \frac2{k(k+1)}\left(\lambda(A)+kx-d\right).$$
\end{proof}

\begin{thm}\label{thmstabA}
Let $A$ be an additively left-stable bounded subset of real numbers. \\
We write $d={\rm diam}(A)$ and for $x\in\R$, $A_x=A\cap[0,x]$. \\
If $\lambda(A)=d/2>0$, then for all $x\in[0,d]$, $\lambda(A_x)\le dh(x/d)$ where $h$ (see figure 1) is the continuous, piecewise affine function defined on $[0,1]$ by 
$$h(x)=
\begin{cases}
0,&\mbox{ if }x=0,\\
\frac1{k+1}\left(2x-\frac{1}k\right),&\mbox{ if } x\in\left(\frac{1}k,\frac{1}{k-1}\right],\, k\ge 2.
\end{cases}$$
\end{thm}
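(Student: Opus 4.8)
The plan is to derive Theorem~\ref{thmstabA} from Lemma~\ref{lem_stab}. First I would rescale: replacing $A$ by $A/d$ multiplies both $\lambda(A_x)$ and $h(x/d)$ by $d$, so it suffices to prove $\lambda(A_x)\le h(x)$ for all $x\in[0,1]$ under the normalization $d=1$, $\lambda(A)=\tfrac12$. Then I would isolate the part of $A$ that has full measure near $1$: put $c:=\inf\{y\in[0,1]:\lambda(A\cap[y,1])=1-y\}$. Since $y\mapsto\lambda(A\cap[y,1])$ is $1$-Lipschitz the infimum is attained, so $\lambda(A\cap[c,1])=1-c$; splitting the inner measure of $A$ over the measurable interval $[0,c]$ gives $\lambda(A\cap[0,c])=c-\tfrac12\ge0$, hence $c\ge\tfrac12$ and, by the same splitting, $\lambda(A_y)=y-\tfrac12$ for every $y\in[c,1]$. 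A short computation shows $y-\tfrac12\le h(y)$ on $[\tfrac12,1]$ (on $(\tfrac1k,\tfrac1{k-1}]$ it reduces to $y\le\tfrac{k+2}{2k}$, which follows from $y\le\tfrac1{k-1}$ since $(k-2)(k+1)\ge0$), so the theorem holds for $x\in[c,1]$, and in particular $\lambda(A)=h(1)=\tfrac12$.

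The heart of the matter is the range $x\in A$ with $0<x<c$. For such an $x$, let $k^\ast\ge2$ be the largest integer with $(k^\ast-1)x<c$; it exists because $x<c$ makes $k^\ast=2$ admissible, and it satisfies $k^\ast x\ge c$ (otherwise $k^\ast+1$ would be admissible). By the very definition of $c$ we have $\lambda(A\cap[(k^\ast-1)x,1])<\lambda([(k^\ast-1)x,1])$, so Lemma~\ref{lem_stab} applies with $k=k^\ast$. If $k^\ast x>1$ it gives $\lambda(A_x)\le\tfrac{2}{k^\ast+1}\bigl(x-\tfrac1{2k^\ast}\bigr)$; if $k^\ast x\le1$ it gives $\lambda(A_x)\le\tfrac{2}{k^\ast(k^\ast+1)}\lambda(A_{k^\ast x})$, and since $c\le k^\ast x\le1$ the first paragraph lets me substitute $\lambda(A_{k^\ast x})=k^\ast x-\tfrac12$, yielding the same bound. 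Thus in both cases $\lambda(A_x)\le g_{k^\ast}(x)$ where $g_m(x):=\tfrac1{m+1}\bigl(2x-\tfrac1m\bigr)$, and since $h(x)=g_k(x)$ for $x\in(\tfrac1k,\tfrac1{k-1}]$ it remains to check $g_{k^\ast}(x)\le g_k(x)$. From $(k^\ast-1)x<c\le1$ and $x>\tfrac1k$ one reads off $k^\ast\le k$, and the identity $g_m(x)-g_{m+1}(x)=\tfrac{2}{(m+1)(m+2)}\bigl(x-\tfrac1m\bigr)$ together with $x\le\tfrac1{k-1}\le\tfrac1m$ for $k^\ast\le m\le k-1$ telescopes to $g_{k^\ast}(x)\le g_{k^\ast+1}(x)\le\cdots\le g_k(x)$.

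Finally I would upgrade from $x\in A\cup[c,1]$ to all $x\in[0,1]$: the function $\phi(x):=\lambda(A_x)$ is $1$-Lipschitz (again by splitting $\lambda$ over the measurable set $[0,y]$) and $h$ is continuous and nondecreasing, so if $A\cap[0,x]=\emptyset$ the bound is trivial, and otherwise $\phi(x)=\phi(x')$ for $x':=\sup(A\cap[0,x])\le x$, where $x'$ is either a point of $A$, or a point of $[c,1]$, or a limit of points of $A$; in every case $\phi(x')\le h(x')\le h(x)$ by the previous steps, continuity of $\phi$ and $h$, and monotonicity of $h$. The step I expect to be the main obstacle is the bookkeeping around $k^\ast$: verifying that it is well defined, that the technical hypothesis of Lemma~\ref{lem_stab} holds for $k=k^\ast$ precisely because $(k^\ast-1)x<c$, that $k^\ast x$ lands in the saturated window $[c,1]$ when $k^\ast x\le1$ (so that $\lambda(A_{k^\ast x})$ is known exactly and no iteration of the lemma is needed), and that $k^\ast\le k$ so that the comparison between $g_{k^\ast}$ and $g_k$ runs in the favorable direction. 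Everything else — $c\ge\tfrac12$, the inequality $y-\tfrac12\le h(y)$, and the continuity and monotonicity of $h$ and $\phi$ — is routine.
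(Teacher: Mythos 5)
Your proposal is correct and follows essentially the same route as the paper: you define the same cutoff $c$, apply Lemma~\ref{lem_stab} at the largest integer $k^\ast$ for which $(k^\ast-1)x<c$, note that $k^\ast x$ lands in the saturated zone when $k^\ast x\le 1$ so that $\lambda(A_{k^\ast x})$ is known exactly, then compare $g_{k^\ast}(x)$ with $g_k(x)=h(x)$ by a monotonicity-in-$m$ argument, and finally pass from $x\in A$ to all $x$ via $x'=\sup(A\cap[0,x])$. The only cosmetic differences are that the paper first tries $k$ directly (the case $k^\ast=k$ in your setup) and falls back to a smaller $\ell$ if the hypothesis of the lemma fails, whereas you define $k^\ast$ once and for all; and the paper phrases the comparison of $g_m(x)$ values via the criterion $u_{\ell+1}\ge u_\ell\Leftrightarrow\ell x\le d$ while you make the same point by a telescoping identity. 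You also spell out the auxiliary inequality $y-\tfrac12\le h(y)$ on $[\tfrac12,1]$, which the paper asserts in passing.
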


\begin{rem}
The hypothesis $\lambda(A)=d/2$ allows us to get a neater result. We could get rid of this hypothesis by noticing that :
\\$\bullet$ if $\lambda(A)<d/2$ then we can take $d'=2d-2\lambda(A)$ and $A'=A\cup[d,d']$ satisfies the hypothesis of Theorem \ref{thmstabA} with ${\rm diam}(A')=d'$, which implies $\lambda(A'_x)\le d'h(x/d')$.
\\$\bullet$ if $\lambda(A)>d/2$ then we can take $d'\in[0,d]$ such that $\lambda(A_{d'})=d'/2$, $A=A'\cup[d',d]$ with $A'=A\cap[0,d']$ satisfying the hypothesis of Theorem \ref{thmstabA} with ${\rm diam}(A')=d'$, which implies $\lambda(A'_x)\le d'h(x/d')$ for $x\le d'$.\\
In this last case, the inclusion $[d',d]\subset A$ is due to the stability of $A$ and the inequality $\lambda(A+A)\ge d+\lambda(A)$. 
\end{rem}

\begin{center}
\begin{tikzpicture}[xscale=10,yscale=10]
\draw [<->] (0,0.6) -- (0,0) -- (1.1,0);
\node [below left] at (0,0) {\small $0$};
\draw [dashed] (0,0.5) -- (1,0.5);
\node [left] at (0,0.5) {\small $1/2$};
\draw [dashed] (1,0) -- (1,0.5);
\node [below] at (1,0) {\small $1$};
\draw [dashed] (0.5,0) -- (0.5,1/6);
\draw [dashed] (0,1/6) -- (0.5,1/6);
\node [below] at (0.5,0) {\small $1/2$};
\node [left] at (0,1/6) {\small $1/6$};
\draw[fill,red] (1,0.5) circle [radius=0.003] ;
\draw[fill,red] (0.5,1/6) circle [radius=0.003] ;
 \draw[fill,red] (1/3,1/12) circle [radius=0.002] ;
\draw[fill,red] (1/4,1/20) circle [radius=0.002] ;
\draw[fill,red] (1/5,1/30) circle [radius=0.003] ;
\draw [dashed] (1/5,0) -- (1/5,1/30);
\draw [dashed] (0,1/30) -- (1/5,1/30);
\node [below] at (1/5,0) {\small $1/k$};
\node [left] at (0,1/30) {\small $1/(k(k-1))$};
\draw[fill,red] (1/6, 1/42) circle [radius=0.002] ;
\draw[fill,red] (1/7,1/56) circle [radius=0.002] ;
\draw[fill,red] (1/8,1/72) circle [radius=0.002] ;
\draw[fill,red] (1/9, 1/90) circle [radius=0.002] ;
\draw[fill,red] (1/10,1/110) circle [radius=0.002] ;
\draw[fill,red] (1/11, 1/132) circle [radius=0.001] ;
\draw[fill,red] (1/12, 1/156) circle [radius=0.001] ;
\draw[fill,red] (1/13, 1/182) circle [radius=0.001] ;
\draw[fill,red] (1/14, 1/210) circle [radius=0.001] ;
\draw[fill,red] (1/15, 1/240) circle [radius=0.001] ;
\draw[fill,red] (1/16, 1/272) circle [radius=0.001] ;
\draw[fill,red] (1/17, 1/306) circle [radius=0.001] ;
\draw[fill,red] (1/18, 1/342) circle [radius=0.001] ;

\draw[red](1,0.5) -- (0.5,1/6);
\node[red, above] at (0.75,0.34) {$h$};
\draw[red](0.5,1/6) -- (1/3,1/12);
\draw[red](1/4,1/20) -- (1/3,1/12);
\draw[red](1/4,1/20) -- (1/5,1/30);
\draw[red](1/5,1/30) -- (1/6,1/42);
\draw[red](1/7,1/56) -- (1/6,1/42);
\draw[red](1/7,1/56) -- (1/8,1/72);
\draw[red](1/9,1/90) -- (1/8,1/72);
\draw[red](1/9,1/90) --  (1/10,1/110);
\draw[red] (1/10,1/110)-- (1/11,1/132);
\draw[red] (1/12,1/156)-- (1/11,1/132);
\draw[red] (1/12,1/156)-- (1/13,1/182);
\draw[red] (1/14,1/210)-- (1/13,1/182);

\node[below] at (1/2,-0.08) {Figure 1: graph of function $h$.}; 
\end{tikzpicture}
\end{center}

\begin{proof}
Let $k\ge 2$ be an integer and let $x\in A\cap \left(\frac{d}k,\frac{d}{k-1}\right]$. Since $kx> d$ and $\lambda(A)=d/2$, Lemma \ref{lem_stab} gives the conclusion when $\lambda(A\cap[(k-1)x,d])<\lambda([(k-1)x,d])$.
\\
If $\lambda(A\cap[(k-1)x,d])=\lambda([(k-1)x,d])$, then with the notation used in the proof of Lemma \ref{lem_stab}, we have $c\le (k-1)x$. \\
If $x< c$, let $\ell$ be the largest integer such that $(\ell-1) x<c\le d$ ($\ell\ge 2$). We have $\ell<k$ thus $\ell x\le(k-1)x\le d$ and $\ell x\ge c$ thus $\lambda(A_{\ell x})=\lambda(A)-(d-\ell x)$. Therefore, by Lemma \ref{lem_stab},
$$\ell x-\frac{d}2=\lambda(A)-(d-\ell x)=\lambda(A_{\ell x})\ge\frac{\ell(\ell+1)}{2}\lambda(A_x).$$
Let $(u_i)_{i\ge 1}=(u_i(x,d))_{i\ge 1}$ be the sequence defined by 
\begin{equation}\label{def_ui}
u_i=u_i(x,d)=\frac1{i+1}(2x-d/i)
\end{equation}
We have $u_{i-1}\le u_i\Leftrightarrow (i-1)x\le d$ and $u_i=u_{i-1}\Leftrightarrow (i-1)x=d$. Since $(k-1)x\le d<kx$, the sequence $(u_i)_{1\le i\le k}$ is nondecreasing, the sequence $(u_i)_{i\ge k}$ is decreasing and for all $i\in\N$, $u_k\ge u_i$, this inequality being strict unless $i=k-1$ and $(k-1)x=d$.\\
It yields
$$\lambda(A_x)\le \frac1{\ell+1}\left(2 x-\frac{d}\ell\right)=u_\ell\le u_k=\frac1{k+1}\left(2x-\frac{d}k\right)=dh(x/d). $$
If on the other side $x\ge c$, then 
$$\lambda(A_x)=\lambda(A)-(d-x)=x-\frac{d}2=u_1\le u_k=\frac1{k+1}\left(2x-\frac{d}k\right).$$
In any case, we get the expected inequality when $x\in A$.\\ 
If $x\in A^c\cap \left(\frac{d}k,\frac{d}{k-1}\right]$ and $\lambda(A_x)>0$, let $x'\in(A\cap [0,x])$ and $k'$ be such that $x'\in  \left(\frac{d}{k'},\frac{d}{k'-1}\right]$. Then, using $u_k=\max_{i\in \N}u_i$ 
we get
$$\lambda(A_{x'})\le \frac1{k'+1}\left(2x'-\frac{d}{k'}\right)\le \frac1{k'+1}\left(2x-\frac{d}{k'}\right)=u_{k'}\le u_k=\frac1{k+1}\left(2x-\frac{d}k\right).$$
If $x'$ tends to $\sup(A\cap [0,x])$, we get the expected inequality.
\end{proof}

The following proposition proves that the upper bound in Theorem \ref{thmstabA} is sharp, that for a given set $A$ satisfying the hypothesis of Theorem  \ref{thmstabA}, the upper bound can only be sharp in one single point in $(0,d)$   and that for each $x\in(0,d)$ there exist a unique closed set satisfying this sharpness at the point $x$.
\begin{thm}
Let $d>0$ be a real number. For any additively left-stable bounded subset  of real numbers $A\subset [0,d]$ of diameter $d$  such that $\lambda(A)=d/2$, there exists at most one point $x\in(0,d)$ such that $\lambda(A_x)= dh(x/d)$. \\
Furthermore, for all $x\in(0,d)$ there exists a unique additively left-stable closed set $A$ of diameter $d$ such that $\lambda(A)=d/2$ and $\lambda(A_x)=d h(x/d)=\frac1{k+1}\left(2x-\frac{d}k\right)$, where $k$ is the unique integer such that $(k-1)x<d\le kx$. This set is the following union of intervals:
\begin{equation}\label{sharpset}
\bigcup_{i=0}^{k-1}\left[i\left(\frac{k-1}{k+1}x+\frac{d}{k(k+1)}\right),ix\right]\bigcup\left[\frac{k(k-1)}{k+1}x+\frac{d}{{k+1}},d\right].
\end{equation}
 \end{thm}
 
 \begin{rem}
 In other words, if $\mathcal{E}$ is the set of additively left-stable bounded subsets $A$ of real numbers such that $\lambda(A)=d/2$ and $\phi:(0,d)\rightarrow\mathcal{E}$ is the function defined by  $\phi(x)= \eqref{sharpset} $, then for any $A\in\mathcal{E}$, the set $P$ of elements $y\in(0,1)$ such that $\lambda(A_y)=dh(y/d)$ is empty unless there exist $x\in(0,1)$ such that $A=\phi(x)$ in which case $P=\{x\}$ ($\phi$ is a one to one function). 
 \end{rem}
 
 The following figure shows how the set $A=\phi(x)$ is obtained and the graph of the function $y\mapsto\lambda(A_y)$.
 
 \begin{center}
\begin{tikzpicture}[xscale=10,yscale=10]
\draw [<->] (0,0.6) -- (0,0) -- (1.1,0);
\node [below left] at (0,0) {\small $0$};
\draw [dashed] (0,0.5) -- (1,0.5);
\node [left] at (0,0.5) {\small $d/2$};
\draw [dashed] (1,0) -- (1,0.5);
\node [below] at (1,0) {\small $d$};
\draw [dashed] (0.5,0) -- (0.5,1/6);
\draw [dashed] (0,1/6) -- (0.5,1/6);
\node [below] at (0.5,0) {\small $d/2$};
\node [left] at (0,1/6) {\small $d/6$};
\draw[fill,red] (1,0.5) circle [radius=0.003] ;
\draw[fill,red] (0.5,1/6) circle [radius=0.003] ;
 \draw[fill,red] (1/3,1/12) circle [radius=0.002] ;
\draw[fill,red] (1/4,1/20) circle [radius=0.002] ;
\draw[fill,red] (1/5,1/30) circle [radius=0.003] ;
\draw [dashed] (1/5,0) -- (1/5,1/30);
\draw [dashed] (0,1/30) -- (1/5,1/30);
\node [below] at (1/5,0) {\small $d/k$};
\node [left] at (0,1/30) {\small $d/(k(k-1))$};
\draw[fill,red] (1/6, 1/42) circle [radius=0.002] ;
\draw[fill,red] (1/7,1/56) circle [radius=0.002] ;
\draw[fill,red] (1/8,1/72) circle [radius=0.002] ;
\draw[fill,red] (1/9, 1/90) circle [radius=0.002] ;
\draw[fill,red] (1/10,1/110) circle [radius=0.002] ;
\draw[fill,red] (1/11, 1/132) circle [radius=0.001] ;
\draw[fill,red] (1/12, 1/156) circle [radius=0.001] ;
\draw[fill,red] (1/13, 1/182) circle [radius=0.001] ;
\draw[fill,red] (1/14, 1/210) circle [radius=0.001] ;
\draw[fill,red] (1/15, 1/240) circle [radius=0.001] ;
\draw[fill,red] (1/16, 1/272) circle [radius=0.001] ;
\draw[fill,red] (1/17, 1/306) circle [radius=0.001] ;
\draw[fill,red] (1/18, 1/342) circle [radius=0.001] ;

\draw[red](1,0.5) -- (0.5,1/6);
\node[red, above] at (0.75,0.34) {$h$};
\node[blue, below] at (0.75,0.24) {$y\mapsto \lambda(A_y)$};
\draw[red](0.5,1/6) -- (1/3,1/12);
\draw[red](1/4,1/20) -- (1/3,1/12);
\draw[red](1/4,1/20) -- (1/5,1/30);
\draw[red](1/5,1/30) -- (1/6,1/42);
\draw[red](1/7,1/56) -- (1/6,1/42);
\draw[red](1/7,1/56) -- (1/8,1/72);
\draw[red](1/9,1/90) -- (1/8,1/72);
\draw[red](1/9,1/90) --  (1/10,1/110);
\draw[red] (1/10,1/110)-- (1/11,1/132);
\draw[red] (1/12,1/156)-- (1/11,1/132);
\draw[red] (1/12,1/156)-- (1/13,1/182);
\draw[red] (1/14,1/210)-- (1/13,1/182);

%
 \node [below ] at (0,-0.10){\tiny $0$};
  \draw []  (1,-0.11) -- (1,-0.09);

  \draw[fill,blue] (1/3+0.05,1/12+0.025 ) circle [radius=0.002] ;
  \draw[dashed, blue](1/3+0.05,0) -- (1/3+0.05,1/12+0.025);
\draw[blue] (1/4+0.025,0) -- (1/3+0.05, 1/12+0.025) -- (1/2+0.05,1/12+0.025) -- (2/3+0.1, 1/4+0.075) -- (3/4+0.075,1/4+0.075)--(1,1/2);
\node [below, blue] at (1/3+0.05,0){$x$};
 \draw []  (0,-0.1) -- (1,-0.1);
 \draw []  (0,-0.11) -- (0,-0.09);
  \draw []  (1,-0.11) -- (1,-0.09);
 \node [below ] at (1,-0.10){\tiny $1$};
  \draw[fill,blue] (0,-0.1 ) circle [radius=0.003] ;

\draw[blue,line width=1] (1/4+0.025,-0.1) -- (1/3+0.05,-0.1);
\node [below, blue ] at (0.33,-0.10){\tiny $I_1$};
\draw[blue, line width=1] (1/2+0.05,-0.1) -- (2/3+0.1,-0.1); 
\node [below, blue ] at (0.67,-0.10){\tiny $I_2$};
\draw[blue, line width=1] (3/4+0.075,-0.1) -- (1,-0.1); 
\node [below, blue ] at (0.9,-0.10){\tiny $I_3$};
\node [left, blue] at (0,-0.1){$A=\phi(x)$};
\node at (1/2,-0.18){Figure 2: construction of the set $\phi(x)$.};
\end{tikzpicture}
\end{center}

\begin{proof}
For each $x\in(0,d)$, let $k$ be the integer such that $x\in\left[\frac{d}k,\frac{d}{k-1}\right)$.\\
Define $A=\cup_{i=0}^{k}I_i$ with $I_i=\left[i\left(\frac{k-1}{k+1}x+\frac{d}{k(k+1)}\right),ix\right]$ for $i\le k-1$ and $I_k=\left[\frac{k(k-1)}{k+1}x+\frac{d}{{k+1}},d\right]$.
The intervals $I_i$ are pairwise disjoints. Indeed, since $x<\frac{d}{k-1}$, we have for  $i\le k-1$,
\[\frac{i+1}{k}d\ge\frac{2i-(k-1)}{k-1}d>(2i-(k-1))x, \]
thus
\[(i+1)\left(\frac{k-1}{k+1}x+\frac{d}{k(k+1)}\right)>ix.\]

%
Furthermore, each interval $I_i$ is nonempty 
so
\begin{align*}
\lambda(A)&=\sum_{i=0}^{k-1}i\left(x-\left(\frac{k-1}{k+1}x+\frac{d}{k(k+1)}\right)\right)+\left(d-\frac{k(k-1)}{k+1}x-\frac{d}{{k+1}}\right)\\
&=\frac{k(k-1)}2\left(\frac{2}{k+1}x-\frac{d}{k(k+1)}\right)+\frac{kd}{k+1}-\frac{k(k-1)}{k+1}x=\frac{d}2.
\end{align*}
Now,
$$(I_i+I_j)\cap[0,d]=\begin{cases}I_{i+j} &\mbox{ if } i+j\le k\\ 
\emptyset&\mbox{ if } i+j\ge k+1\end{cases}$$
so $(A+A)\cap [0,d]=A$.\\
Furthermore, $\lambda(A_x)=\left(\frac{2}{k+1}x-\frac{d}{k(k+1)}\right)$.\\
It proves that the set $A$ satisfies the required properties.\\

\smallskip

It remains to prove the unicity of such a set.
Let $A$ be a closed set of diameter $d$ and minimum $0$ such that $(A+A)\cap [0,d]=A$, $\lambda(A)=d/2$ and $\lambda(A_x)= \frac1{k+1}\left(2x-\frac{d}k\right)$, where $k$ is the unique integer such that $(k-1)x<d\le kx$.

If there exists $\ell\le k-1$ such that $\lambda(A\cap[\ell x,d])=\lambda([\ell x,d])$, then taking $\ell$ minimal, we have $\lambda(A\cap[(\ell-1)x,d])<\lambda([(\ell-1)x,d])$ and Lemma \ref{lem_stab} implies 
$$\lambda(A_x)\le\frac2{\ell(\ell+1)}\lambda(A_{\ell x})=\frac2{\ell(\ell+1)}\left(\ell x-\frac{d}2\right)=\frac1{\ell+1}\left(2 x-\frac{d}\ell\right)=u_\ell.$$
Using $u_k>\max_{i\not=k}u_i$,
we get
$$\lambda(A_x)\le u_\ell<u_k=\frac1{k+1}\left(2 x-\frac{d}{k}\right),$$
in contradiction with the hypotheses. Therefore $\lambda(A\cap[(k-1) x,d])<\lambda([(k-1) x,d])$ and Lemma \ref{lem_stab} yields $\lambda(A_{(k-1)x})\ge\frac{k(k-1)}2\lambda(A_x)$. Using Ruzsa's inequality  \eqref{min_sum2} and going through the proof of Lemma \ref{lem_stab} with the same notation we get 
\begin{align*}
kx-\frac{d}2=\lambda(A_{kx}')&\ge\lambda(A_{(k-1)x}+A_x)\\
&\ge \lambda(A_{(k-1)x})+k\lambda(A_x)\\&\ge\frac{k(k-1)}2\lambda(A_x)+k\lambda(A_x)\\
& \ge\frac{k(k+1)}2\lambda(A_x)\\
&\ge\frac{k(k+1)}2\left(\frac{2}{k+1}x-\frac{d}{k(k+1)}\right)=kx-\frac{d}2.
\end{align*}
Therefore these inequalities are equalities and
\[\lambda(A_{(k-1)x}+A_x)=\lambda(A_{(k-1)x})+k\lambda(A_x),\] with \[\lambda(A_{(k-1) x})=\frac{k(k-1)}2\lambda(A_x,)\]
and the sets $A_x$ and $A_{(k-1)x}$ are critical sets for \eqref{min_sum2}.
According to Theorem 5 of \cite{JEP}, the set $A_x$ is the union of two intervals $[0,a]\cup[b,{\rm diam}(A_x)]$. By Theorem \ref{thmstabA}, $\lambda(A_a)\le dh(a/d)<a$ if $a>0$, which implies $a=0$ and $A_x=\{0\}\cup[b,{\rm diam}(A_x)]$.\\
Since $dh(x/d)$ is extremal, $x$ has to be the diameter of $A_x$. 
Therefore $A_x=\{0\}\cup I_1$ since $\lambda(A_x)=\left(\frac{2}{k+1}x-\frac1{k(k+1)}\right)$.\\
By stability of $A$, $\lambda(A)=d/2$ and using that $A$ is closed, we get the expected expression of $A$.\\
Given that for $y\in(0,d)\setminus\{x\}$, $\lambda(A_y)<dh(y/d)$, the first statement of the theorem is also proved.  
\end{proof}

Combining Theorem \ref{thmstabA} with Theorem \ref{critA_JEP}, we get upper bounds for the density near the borders of critical sets for Ruzsa's inequality \eqref{min_sum1} in the symmetric case.
\begin{cor}\label{critA_new}
Let $A$ be a bounded closed set of real numbers of positive inner Lebesgue measure and diameter $d$.\\
Assume that $\lambda(A+A)={\rm diam}(A)+\lambda(A)<3\lambda(A)$. \\
Then there exist $b\in[0,d]$ such that the function $g(x)=\lambda((A-\inf(A))\cap[0,x])$ is a non negative, non decreasing continuous function of the form
$$g(x)=\begin{cases}g_1(x)&\mbox{ if }x\in [0,b],\\x-\frac{b}2&\mbox{ if }x\in [b,b+\Delta],\\g_2(x)&\mbox{ if }x\in [b+\Delta,{\rm diam}(A)]\end{cases}$$
with $\Delta=2\lambda(A)-{\rm diam}(A)$, $g_1(x)\le bh(x/b)$ and $g_2(x)\ge \lambda(A)-(d-\Delta-b)h\left(\frac{d-x}{d-\Delta-b}\right)$.
\end{cor}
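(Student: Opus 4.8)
\medskip
\noindent\textbf{Proof outline.}
The plan is to combine Theorem~\ref{critA_JEP} with Theorem~\ref{thmstabA}: the former supplies the coarse three-block decomposition of $A$, and the latter pins down the shape of the two ``stable'' blocks. After a translation we may assume $\inf(A)=0$, so that $g(x)=\lambda(A\cap[0,x])$. First I would apply Theorem~\ref{critA_JEP} to write $A=A_1\cup I\cup(d-A_2)$ (a partition, up to null sets), where $I$ is an interval of length $\Delta=2\lambda(A)-d>0$ and each $A_i$ is, up to a set of measure zero, additively left-stable with $\lambda(A_i)=\tfrac12\,\mathrm{diam}(A_i)$. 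I then set $b:=\mathrm{diam}(A_1)$. Since the three blocks are disjoint, $\inf(A_1)=\inf(A_2)=0$, and $\sup(A)=d$, the decomposition places $A_1$ at the left, $I$ in the middle and $d-A_2$ at the right; more precisely, up to null sets $A_1\subset[0,b]$, $I=[b,b+\Delta]$ and $d-A_2\subset[b+\Delta,d]$, whence $\mathrm{diam}(A_2)=d-b-\Delta$, $\lambda(A_1)=b/2$ and $\lambda(A_2)=(d-b-\Delta)/2$.

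Next I would invoke Theorem~\ref{thmstabA} separately for $A_1$ (of diameter $b$) and for $A_2$ (of diameter $d-b-\Delta$), obtaining $\lambda(A_1\cap[0,x])\le b\,h(x/b)$ for $x\in[0,b]$ and $\lambda(A_2\cap[0,y])\le(d-b-\Delta)\,h\!\left(\tfrac{y}{d-b-\Delta}\right)$ for $y\in[0,d-b-\Delta]$. The remainder is a direct computation of $g$ on each block. On $[0,b]$ one has $A\cap[0,x]=A_1\cap[0,x]$ up to a null set, so $g(x)=\lambda(A_1\cap[0,x])\le b\,h(x/b)$, the asserted bound for $g_1$. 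On $[b,b+\Delta]$ one has $A\cap[0,x]=A_1\cup[b,x]$ up to a null set, so $g(x)=\tfrac b2+(x-b)=x-\tfrac b2$. On $[b+\Delta,d]$, writing $(d-A_2)\cap(x,d]=d-\bigl(A_2\cap[0,d-x]\bigr)$ up to a null set, we get
$$g(x)=\tfrac b2+\Delta+\lambda(A_2)-\lambda\bigl(A_2\cap[0,d-x]\bigr)\ \ge\ \lambda(A)-(d-b-\Delta)\,h\!\left(\frac{d-x}{d-b-\Delta}\right),$$
using $\tfrac b2+\Delta+\lambda(A_2)=\tfrac{d+\Delta}{2}=\lambda(A)$; this is the asserted lower bound for $g_2$. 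Finally $g$ is nonnegative and nondecreasing by construction and continuous because $x\mapsto\lambda(A\cap[0,x])$ is $1$-Lipschitz; one checks that the three formulas glue continuously at $x=b$ (value $b/2$) and at $x=b+\Delta$ (value $b/2+\Delta$), using $h(1)=1/2$. The degenerate cases $b=0$ or $b+\Delta=d$, where $A_1$ or $A_2$ reduces to $\{0\}$, are covered by dropping the corresponding term.

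The step I expect to need the most care is the compatibility of the two quoted statements: Theorem~\ref{critA_JEP} only produces sets $A_1,A_2$ that are left-stable \emph{modulo a set of measure zero}, whereas Theorem~\ref{thmstabA} is phrased for genuinely left-stable sets. This is harmless here, because the two conclusions I use involve $A_1$ and $A_2$ solely through their measure functions $x\mapsto\lambda(A_i\cap[0,x])$, which do not change under a null modification; equivalently, a closed set $X$ with $\inf X=0$ and $\lambda\bigl((X+X)\cap[0,\mathrm{diam}X]\bigr)=\lambda(X)$ already satisfies $(X+X)\cap[0,\mathrm{diam}X]=X$ outside a null set, and this is all that the proof of Lemma~\ref{lem_stab} — hence of Theorem~\ref{thmstabA} — actually exploits. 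Everything else is bookkeeping with the partition from Theorem~\ref{critA_JEP} and the identity $\lambda(A)=(d+\Delta)/2$.
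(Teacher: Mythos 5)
Your outline follows the paper's route: Theorem~\ref{critA_JEP} for the three-block decomposition, $b=\mathrm{diam}(A_1)$, Theorem~\ref{thmstabA} applied to each outer block, and the identity $\lambda(A)=(d+\Delta)/2$ for the bookkeeping, all of which you carry out correctly. But the step you yourself flag as delicate is essentially the \emph{entire content} of the paper's proof, and it is the one step you do not actually perform. Theorem~\ref{critA_JEP} only yields $\lambda\bigl((A_i+A_i)\cap[0,\mathrm{diam}(A_i)]\bigr)=\lambda(A_i)$, whereas Theorem~\ref{thmstabA} assumes the set-theoretic identity $(A_i+A_i)\cap[0,\mathrm{diam}(A_i)]=A_i$. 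Your bridge is the assertion that the proof of Lemma~\ref{lem_stab} ``only exploits'' the measure-level consequence and is therefore stable under null modifications. That claim is plausible, but it is a statement about the internals of two earlier proofs that you would have to re-verify line by line: left-stability enters there through the comparison $\lambda((A+A)_y)\le\lambda(A'_y)$, through the memberships $x\in A$ and $0\in A$ (or the approximation $\veps_n\to 0$), and through the definition of $c$; moreover $A+A$ need not be Lebesgue measurable, so for the \emph{inner} measure the relation $X\subset Y$ with $\lambda(Y)=\lambda(X)$ only gives that $Y\setminus X$ has inner measure zero, not that it is a null set. As written, the crux is asserted rather than proved.

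The paper closes this gap by a different and self-contained device: inside $A_1$ it constructs an explicit closed set $K=\{0,b\}\cup\bigl(\bigcap_{n\ge1}K_n\bigr)$, where $K_n$ is the set of points whose $1/n$-neighbourhoods meet $A$ in positive measure. One checks that $K\subset A$ (since $A$ is closed), that $(K+K)\cap[0,b]=K$ holds \emph{exactly}, and that $\lambda(K\cap[0,x])=\lambda(A_1\cap[0,x])$ for all $x$ (each point of $A_1\setminus K$ has a neighbourhood meeting $A$ in measure zero). Theorem~\ref{thmstabA} then applies to $K$ as stated, giving $g_1(x)\le b\,h(x/b)$, and the right-hand block is handled by symmetry. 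To complete your version you should either reprove Lemma~\ref{lem_stab} and Theorem~\ref{thmstabA} under the weakened hypothesis $\lambda\bigl((A+A)\cap[0,d]\bigr)=\lambda(A)$, or insert a construction of a genuinely left-stable full-measure kernel such as $K$.
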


\begin{center}
\begin{tikzpicture}[xscale=6,yscale=6]
\draw [<->] (0,1.3) -- (0,0) -- (2.1,0);
\node [below left] at (0,0) {\small $0$};
\draw [dashed] (0,0.5) -- (1,0.5);
\node [left] at (0,0.5) {\small $b/2$};
\draw [dashed] (1,0) -- (1,0.5);
\node [below] at (1,0) {\small $b$};
\draw [dashed] (0.5,0) -- (0.5,1/6);
\draw [dashed] (0,1/6) -- (0.5,1/6);
\node [below] at (0.5,0) {\small $b/2$};
\node [left] at (0,1/6) {\small $b/6$};
\draw[fill,red] (1,0.5) circle [radius=0.003] ;
\draw[fill,red] (0.5,1/6) circle [radius=0.003] ;
 \draw[fill,red] (1/3,1/12) circle [radius=0.002] ;
\draw[fill,red] (1/4,1/20) circle [radius=0.002] ;
\draw[fill,red] (1/5,1/30) circle [radius=0.003] ;
\draw [dashed] (1/5,0) -- (1/5,1/30);
\draw [dashed] (0,1/30) -- (1/5,1/30);
\node [below] at (1/5,0) {\small $b/k$};
\node [left] at (0,1/30) {\small $b/(k(k-1))$};
\draw[fill,red] (1/6, 1/42) circle [radius=0.002] ;
\draw[fill,red] (1/7,1/56) circle [radius=0.002] ;
\draw[fill,red] (1/8,1/72) circle [radius=0.002] ;
\draw[fill,red] (1/9, 1/90) circle [radius=0.002] ;
\draw[fill,red] (1/10,1/110) circle [radius=0.002] ;
\draw[fill,red] (1/11, 1/132) circle [radius=0.001] ;
\draw[fill,red] (1/12, 1/156) circle [radius=0.001] ;
\draw[fill,red] (1/13, 1/182) circle [radius=0.001] ;
\draw[fill,red] (1/14, 1/210) circle [radius=0.001] ;
\draw[fill,red] (1/15, 1/240) circle [radius=0.001] ;
\draw[fill,red] (1/16, 1/272) circle [radius=0.001] ;
\draw[fill,red] (1/17, 1/306) circle [radius=0.001] ;
\draw[fill,red] (1/18, 1/342) circle [radius=0.001] ;

\draw[red](1,0.5) -- (0.5,1/6);
\draw[red](0.5,1/6) -- (1/3,1/12);
\draw[red](1/4,1/20) -- (1/3,1/12);
\draw[red](1/4,1/20) -- (1/5,1/30);
\draw[red](1/5,1/30) -- (1/6,1/42);
\draw[red](1/7,1/56) -- (1/6,1/42);
\draw[red](1/7,1/56) -- (1/8,1/72);
\draw[red](1/9,1/90) -- (1/8,1/72);
\draw[red](1/9,1/90) --  (1/10,1/110);
\draw[red] (1/10,1/110)-- (1/11,1/132);
\draw[red] (1/12,1/156)-- (1/11,1/132);
\draw[red] (1/12,1/156)-- (1/13,1/182);
\draw[red] (1/14,1/210)-- (1/13,1/182);

\draw[blue](1,0.5) -- (0.5,0);

\draw[violet](1,0.5) -- (1.3,0.8);
\draw[blue](1.5,1) -- (1.3,0.8);
\draw[blue](1.5,1) -- (1.7,1);
\draw [dashed] (1.3,0) -- (1.3,0.8);
\draw [dashed] (0,0.8) -- (1.3,0.8);
\node[below]at(1.3,0) {$b+\Delta$};
\node[left]at(0,0.8) {$\frac{b}2+\Delta$};
\draw [dashed] (1.7,0) -- (1.7,1);
\draw [dashed] (0,1) -- (1.7,1);
\node[below]at(1.7,0) {$d$};
\node[left]at(0,1) {$\lambda(A)$};

\draw[fill,red] (1.5,1-0.4/6) circle [radius=0.003] ;
\draw[red] (1.5,1-0.4/6)  -- (1.3,0.8);

\draw[fill,red] (1.7-0.4/3,1-0.4/12) circle [radius=0.003] ;
\draw[red] (1.7-0.4/3,1-0.4/12)  -- (1.5,1-0.4/6) ;

\draw[fill,red] (1.7-0.4/4,1-0.4/20) circle [radius=0.003] ;
\draw[red] (1.7-0.4/3,1-0.4/12)  -- (1.7-0.4/4,1-0.4/20);
\draw[fill,red] (1.7-0.4/5,1-0.4/30) circle [radius=0.003] ;
\draw[red] (1.7-0.4/5,1-0.4/30)  -- (1.7-0.4/4,1-0.4/20);
\draw[fill,red] (1.7-0.4/6,1-0.4/42) circle [radius=0.003] ;
\draw[red] (1.7-0.4/5,1-0.4/30)  -- (1.7-0.4/6,1-0.4/42);
\draw[fill,red] (1.7-0.4/7,1-0.4/56) circle [radius=0.003] ;
\draw[red] (1.7-0.4/7,1-0.4/56)  -- (1.7-0.4/6,1-0.4/42);
\draw[fill,red] (1.7-0.4/8,1-0.4/72) circle [radius=0.003] ;
\draw[red] (1.7-0.4/7,1-0.4/56)  -- (1.7-0.4/8,1-0.4/72);
\node[below] at(1,-0.1) {Figure 3: bounds for the density of critical sets for Ruzsa's inequality.};
\end{tikzpicture}
\end{center}

\begin{rem}
Corollary \ref{critA_new} states that the graph of the function $g$ is between the red graph and the blue one in figure 3.
\end{rem}

\begin{proof}
According to Theorem  \ref{critA_JEP}, 
$$A=\min(A)+\left(A_1\cup I\cup (d-A_2)\right)$$ where 
\begin{itemize}
\item $I$ is an interval of measure $\Delta= 2\lambda(A)-d$,
\item for $i=1,2$, $\inf(A_i)=0$, $\lambda(A_i)=\frac12{\rm diam}(A_i)$ and $\lambda((A_i+A_i)\cap[0,{\rm diam}(A_i)])=\lambda(A_i)$.
\end{itemize}
Write $b=\inf I$. Then we have ${\rm diam}(A_1)=b$, $0,b\in A_1$ since $A_1$ is closed.\\
Since $\lambda((A_1+A_1)\cap[0,b])=\lambda(A_1)$, we claim that there exists a closed set $K\subset A_1$ such that $\lambda(K)=\lambda(A_1)$ and $(K+K)\cap[0,b]=K$.\\
We can define such a set $K$ by 
$$K=\{0,b\}\cup\left(\cap_{n\ge 1}K_n\right)\quad\mbox{ where }\quad K_n=\{x\in[0,b]\,:\, \lambda\left(A\cap\left[x-1/n,x+1/n\right]\right)>0\}.$$
Indeed, $K\subset A_1$ since $A_1$ is a closed set.\\
On the one side $K\subset (K+K)\cap[0,b]$ since $0\in K$ and on the other side if $x=y+z\in[0,b]$ with $y,z\in K\setminus\{0\}$ then for any positive integer $n$ such that $\left[x-1/n,x+1/n\right]\subset[0,b]$, $\lambda\left(A_1\cap\left[y-1/(2n),y+1/(2n)\right]\right)>0$ and $\lambda\left(A_1\cap\left[z-1/(2n),z+1/(2n)\right]\right)>0$ thus 
$$\lambda\left(A_1\cap\left[x-1/n,x+1/n\right]\right)=\lambda\left((A_1+A_1)\cap\left[x-1/n,x+1/n\right]\right)>0$$
and $x\in K$. This proves $K=(K+K)\cap[0,b]$.\\
Furthermore, if $x\in A_1\setminus K$, then there exists $n\in\N$ such that $\lambda([x-1/n,x+1/n]\cap A_1)=0$ which implies $\lambda (A_1)=\lambda(K)$.\\
Applying Theorem  \ref{thmstabA} to $K$ yields the result on $[0,b]$. 
Applying this to the set $-A$ we get the full statement of the theorem.
\end{proof}

\section{Additively left-stable sets in $\Z$}

In this section, we shall call {\it interval} a set of consecutive integers and if $a,b\in \N$ we shall write $[a,b]$ for the set $[a,b]\cap\Z$. We shall say that a bounded set $A$ of integers is in {\it normal form} if $\min(A)=0$ and $gcd(A)=1$. For simplicity, we shall state our results for sets of integers in normal form. The general statements can be deduced from this special case by translation ($B\mapsto B^*=B-\min(B)$) and division ($B^*\mapsto A=B^*/gcd(B^*)$). In the general case, arithmetic progressions replace intervals.

\smallskip

The symmetric version of Freiman's $3k-4$ theorem for sets $A$ in normal form states that given a nonempty finite subset $A$ of $\Z$ such that $\min(A)=0$ and $gcd(A)=1$, the inequality $|A+A|\le 3|A|-4$ implies that $A$ has diameter at most $|A+A|-|A|$ and that $A+A$ contains an interval of size at least $2|A|-1$. 

Freiman's result can be reformulated as follows: given a nonempty finite subset $A$ of $\Z$ in normal form, we have
\begin{equation}  
|A+A|\ge\min(3|A|-3,|A|+{\rm diam}(A)).
\end{equation}
Freiman characterized in \cite{Freiman73} the sets $A$ for which equality $|A+A|=3|A|-3$ holds (see also \cite{Jin15} for a corrected and more precise statement). He also gave in \cite{Freiman09} a general description of sets $A$ such that $|A+A|=|A|+{\rm diam}(A)<3|A|-3$. As in the continuous setting, this equality implies that $A$ can be partitioned into three parts, more precisely that
$$A=A_1\cup I\cup ({\rm diam}(A)-A_2)$$
with $I$ an interval of size at least $|I|\ge 2|A|-{\rm diam}(A)-2\ge 2$ and $A_1$ and $A_2$ additively left-stable sets in normal form where an {\it additively left stable set} is a subset $B$ of $\Z$ such that $\min(B)=0$ and $(B+B)\cap [0,{\rm diam}(B)]=B$.

As in the continuous setting, we shall give an upper bound for $|B\cap[0,x]|$ for additively left stable sets $B$. For this purpose, we need a discrete analog of Ruzsa's inequality. Such a result was given by Grynkiewicz in \cite{Gr20}.

Given $A$ and $B$ finite, non empty subsets of $\Z$ in normal form with $|B|\ge 3$, we take $s\ge 1$ as the unique integer such that
\begin{equation}\label{defs}
s(s-1)\left(\frac{|B|}2-1\right)+s-1<|A|\leq s(s+1)\left(\frac{|B|}2-1\right)+s.
\end{equation}
Grynkiewicz proved that if 
\begin{equation}\label{min_sum_disc}
 |A+B|< \left(\frac{|A|}s+\frac{|B|}2-1\right)(s+1),
 \end{equation}
 then ${\rm diam}(B)\le|A+B|-|A|$.
We will actually use the following weaker result.
\begin{prop}[Grynkiewicz]\label{corG}
Let $A$ and $B$ be finite, non empty subsets of $\Z$ in normal form with $|B|\ge 3$ and let $s'\ge 1$ be such that
$$|A|\ge s'(s'-1)\left(\frac{|B|}2-1\right)+s'.$$
If ${\rm diam}(B)\ge|A+B|-|A|+1$, then
$$ |A+B|\ge |A|+s'\left({|B|}-2\right)+1. $$
\end{prop}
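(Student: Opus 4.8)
The plan is to deduce Proposition~\ref{corG} from Grynkiewicz's theorem stated just above (the one attached to \eqref{defs} and \eqref{min_sum_disc}), by comparing the two thresholds on $|A+B|$ and then arguing by contradiction.

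First I would let $s$ be the unique integer determined by \eqref{defs}, and set $\psi(t)=t(t-1)\left(\frac{|B|}2-1\right)+t-1$ for integers $t\ge 1$. Since $\psi(s+1)=s(s+1)\left(\frac{|B|}2-1\right)+s$, the defining relation \eqref{defs} reads exactly $\psi(s)<|A|\le\psi(s+1)$; and as $|B|\ge 3$, the map $t\mapsto\psi(t)$ is strictly increasing. The hypothesis of the proposition says $|A|\ge s'(s'-1)\left(\frac{|B|}2-1\right)+s'=\psi(s')+1>\psi(s')$. If we had $s<s'$, i.e. $s+1\le s'$, then $|A|\le\psi(s+1)\le\psi(s')<|A|$, a contradiction; hence $s\ge s'$.

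Next I would assume, for contradiction, that $|A+B|\le|A|+s'(|B|-2)$. Because $s'\le s$ and $|B|-2\ge 1$, this gives $|A+B|\le|A|+s(|B|-2)$. The elementary identity
$$s(|B|-2)-(s+1)\left(\frac{|B|}2-1\right)=(s-1)\left(\frac{|B|}2-1\right)$$
shows that the inequality $|A|+s(|B|-2)<\left(\frac{|A|}s+\frac{|B|}2-1\right)(s+1)$ is equivalent to $s(s-1)\left(\frac{|B|}2-1\right)<|A|$, which holds since the left inequality in \eqref{defs} gives $s(s-1)\left(\frac{|B|}2-1\right)<|A|-(s-1)\le|A|$. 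Therefore $|A+B|<\left(\frac{|A|}s+\frac{|B|}2-1\right)(s+1)$, so the hypothesis \eqref{min_sum_disc} is met and Grynkiewicz's theorem furnishes an arithmetic progression $P_B$ with $B\subset P_B$ and $|P_B|\le|A+B|-|A|+1$. Then $N_B\le|P_B|-1\le|A+B|-|A|$, contradicting $N_B>|A+B|-|A|+1$. Hence $|A+B|\ge|A|+s'(|B|-2)+1$, as claimed.

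I expect no real obstacle here: the substantive content lies entirely in Grynkiewicz's theorem, which may be invoked, and the proposition is merely a convenient weakening obtained by allowing any $s'\le s$ in place of the sharp index $s$ and by rephrasing the sumset bound. The only point that needs genuine checking is the threshold comparison in the previous paragraph, namely that $|A+B|\le|A|+s'(|B|-2)$ forces the strict inequality \eqref{min_sum_disc}; this in turn rests only on the displayed identity and the left-hand inequality of \eqref{defs}.
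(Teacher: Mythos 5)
Your proposal is correct and follows essentially the same route as the paper: both reduce to Grynkiewicz's theorem by first checking $s'\le s$ (you via the monotonicity of $\psi$, the paper implicitly) and then comparing $\left(\frac{|A|}s+\frac{|B|}2-1\right)(s+1)$ with $|A|+s(|B|-2)$ using the left inequality of \eqref{defs}; your contradiction framing versus the paper's direct chain of inequalities is only a cosmetic difference.
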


\begin{proof}
Assume that ${\rm diam}(B)\ge|A+B|-|A|+1$.\\
Let $s$ be defined by \eqref{defs}, then the hypothesis implies $s'\le s$. According to Grynkiewicz's result
\begin{align*}
 |A+B|&\ge \left(\frac{|A|}s+\frac{|B|}2-1\right)(s+1)\ge |A|+\frac{|A|}s+\frac12\left({|B|}-2\right)(s+1)\\
&\ge |A|+s\left({|B|}-2\right)+1
\ge |A|+s'\left({|B|}-2\right)+1.
\end{align*}
\end{proof}

\begin{lem}\label{lem_stab_disc}
Let $A$ be a finite nonempty subset of $\Z$ in normal form {with at least 2 elements} such that $(A+A)\cap[0,{\rm diam}(A)]=A$.\\
Write $N={\rm diam}(A)$ and for $x\in \N$, write $A_x=A\cap[0,x]$.\\
 For $x\in A$ such that $gcd(A_x)=1$ and $k\in\N$ such that $[(k-1)x,N]\not\subset A$, we have
 $$|A_x|\le\frac{2(k-1)}k+\frac{2}{k(k+1)}\begin{cases}|A_{kx}|&\mbox{ if }kx\le N\\|A|+{kx-N}&\mbox{ if }kx>N.
 \end{cases}$$
\end{lem}
\begin{rem}
For sets $B$ with $\min(B)=0$ and $d=gcd(B)>1$, we get
$$|B\cap[0,x]|\le\frac{2(k-1)}k+\frac{2}{k(k+1)}\begin{cases}|B_{kx}|&\mbox{ if }kx\le {\rm diam}(B)\\|B|+\frac{kx-{\rm diam}(B)}{d}&\mbox{ if }kx>{\rm diam}(B)\end{cases}$$
as long as $x\in B$, $gcd(B_x)=d$ and $B\cap[(k-1)x,{\rm diam}(B)]\not=d\Z\cap [(k-1)x,{\rm diam}(B)]$.
\end{rem}
\begin{proof}
We define $c=\min\{n\in [0,N]\,:\, [n,N]\subset A\}$ and $A'=A\cup([c,+\infty)\cap\Z)$. 
We can assume $c>0$ otherwise $A=[0,N]$ and the result is trivial.\\
We get for any $k\in\N$, $kA'=A'$ and 
$$\begin{cases}
\forall y\in\N,& (kA)_y\subset A'_{y},\\
\forall y\in[0,N],& (kA)_y=A_{y}=A'_{y}.
\end{cases}$$
Let $x\in A$ such that $gcd(A_x)=1$ be fixed. We have $|A_x|\ge 3$, otherwise $gcd(A_x)\not=1$ ($1\notin A$ otherwise $A=[0,N]$ by stability and $c=0$). \\
Let $k\ge 2$ be such that $[(k-1)x,N]\not\subset A$. Then for any $\ell\in[2, k]$, we have $(\ell-1)x<c$. Let $\ell\in[2,k]$ be such that $|A_{(\ell-1)x}|\ge \frac{\ell(\ell-1)}2(|A_x|-2)+\ell$ ($\ell=2$ satisfies this inequality).\\
Since $A_{(\ell-1)x}+A_x\subset A'_{\ell x}$, by Proposition \ref{corG}, we have either
$$ |A'_{\ell x}|\geq |A_{(\ell-1)x}+A_x|\ge |A_{(\ell-1)x}|+\ell(|A_x|-2)+1$$
 or
 $$ |A'_{\ell x}|\geq |A_{(\ell-1)x}+A_x|\ge |A_{(\ell-1)x}|+x. $$
 Since $ A'_{\ell x}$ is the disjoint union of $A_{(\ell-1)x}$ and $A'\cap[(\ell-1)x+1,\ell x]$, the second inequality implies $[(\ell-1)x+1,\ell x]\subset A'$, thus $(\ell -1)x\ge c$ by stability of $A'$, contrary to the hypothesis.
 Therefore
$$ |A'_{\ell x}|\ge |A_{(\ell-1)x}|+\ell(|A_x|-2)+1\ge 
\frac{\ell(\ell+1)}2(|A_x|-2)+\ell+1.$$
We proved by iteration on $\ell\le k$ that
$$ |A'_{k x}|\ge
\frac{k(k+1)}2(|A_x|-2)+k+1.$$
 Now, $$|A'_{kx}|=\begin{cases}|A_{kx}|&\mbox{ if }kx\le N\\kx-N+|A|&\mbox{ if }kx>N\end{cases},$$
 thus we get the expected upper bound for $|A_x|$.
\end{proof}

\begin{thm}\label{stabAdisc}
Let $A$ be a finite nonempty subset of $\Z$. Write $N={\rm diam}(A)$.\\
Assume that $\min(A)=0$, $gcd(A)=1$, $(A+A)\cap[0,N]=A$ and $|A|=\lfloor\frac{N+1}{2}\rfloor+1$.\\
For $n\in [0,N]$, we write $A_n=A\cap[0,n]$. For any $n\in [2,N]$ such that $gcd(A_n)=1$, we have
$|A_n|-1\le H_n$ where $(H_n)_{n\in[2,N]}$ is the sequence defined by (see figure 4)
$$H_n=1+\left\lfloor\frac1{k+1}\left(2(n-1)-\frac{2}k\left\lfloor\frac{N}2\right\rfloor\right)\right\rfloor\quad\mbox{ for }\quad n\in\left(1+\frac{2\lfloor N/2\rfloor}k,1+\frac{2\lfloor N/2\rfloor}{k-1}\right], k\ge 2,$$
This inequality is tight: for $n, N$ fixed with $0\le n\le N$, we can find a set $A$ satisfying the hypotheses such that $|A_n|=1+ H_n$.
\end{thm}

\begin{rem}\label{exc}
The hypothesis $gcd(A_n)=1$ is necessary. Indeed if $x\in \left[2,\frac12\lfloor N/2\rfloor+1\right]$, \\
$k=\left\lceil\frac{\lfloor N/2\rfloor}{x-1}\right\rceil$ and 
$$A=\bigcup_{i=0}^{k-1}\{ix\}\cup\left[\lfloor N/2\rfloor+k, N\right]$$
then $|A|=\lfloor\frac{N+1}{2}\rfloor+1$ and $(A+A)\cap[0,N]=A$ since $kx\ge \lfloor N/2\rfloor +k$ and $(k-1)x\le \lfloor N/2\rfloor+k-1$. Therefore $A$ satisfies the hypotheses of the theorem, but $|A_{\ell x}|=\ell+1>1+H_{\ell x}$ for $\ell\leq k-1$.
\end{rem}

In the following figure, we draw (for $N=48$) in blue the piecewise affine function \\$g:x\mapsto 1+2\lfloor N/2\rfloor h\left(\frac{x-1}{2\lfloor N/2 \rfloor } \right)$ where $h$ is the function defined in Theorem \ref{thmstabA}, and in red the sequence $(H_n)_n$. The points $(n,H_n)$ correspond to the points on the grid $\Z^2$ which are just below (or on) the graph of the function $g$.
The black circles represent the points $(n,|A_n|-1)$ with $|A_n|-1>H_n$ when $A=\bigcup_{i=0}^{11}\{3i\}\cup\left[36, 48\right]$ is one of the sets described in Remark \ref{exc}.
\begin{center}
\begin{tikzpicture}[xscale=0.25,yscale=0.25]
\draw [<->] (0,27) -- (0,0) -- (50,0);
\draw [very thin] (0,0) grid (48,24);
\node [below left] at (0,0) {\small $0$};
\draw [] (0,0) -- (48,24);
\draw [dashed] (0,24) -- (48,24);
\node [left] at (0,24) {\small $N/2$};
\draw [dashed] (48,0) -- (48,24);
\node [below] at (48,0) {\small $N$};
\draw [dashed] (24,0) -- (24,48/6);
\draw [dashed] (0,48/6) -- (24,48/6);
\node [below] at (24,0) {\small $N/2$};
\node [below] at (16,0) {\small $N/3$};
\node [below] at (12,0) {\small $N/4$};
\node [below] at (8,0) {\small $N/6$};
\draw [thick, blue] (25,48/6+1) -- (49,25);
\draw[fill,red] (48,24) circle [radius=0.2] ;
\draw[fill,red] (24,48/6) circle [radius=0.2] ;
\draw[fill,red] (48/3,48/12) circle [radius=0.2] ;
\draw [thick, blue] (25,48/6+1) -- (48/3+1,48/12+1);
\draw[fill,red] (48/4,3) circle [radius=0.2] ;
\draw [thick, blue] (48/4+1,48/20+1) -- (48/3+1,48/12+1);
\draw [thick, blue] (48/4+1,48/20+1) -- (48/5+1,48/30+1);
\draw [thick, blue] (48/6+1,48/42+1) -- (48/5+1,48/30+1);
\draw [thick, blue] (48/6+1,48/42+1) -- (48/7+1,48/56+1);
\draw [thick, blue] (48/8+1,48/72+1) -- (48/7+1,48/56+1);
\draw [thick, blue] (48/8+1,48/72+1) -- (48/9+1,48/90+1);
\draw[fill,red] (48/6,1) circle [radius=0.2] ;
\draw[fill,red] (2,1) circle [radius=0.2] ;
\draw[fill,red] (3,1) circle [radius=0.2] ;
\draw[fill,red] (4,1) circle [radius=0.2] ;
\draw[fill,red] (5,1) circle [radius=0.2] ;
\draw[fill,red] (6,1) circle [radius=0.2] ;
\draw[fill,red] (7,1) circle [radius=0.2] ;

\draw[fill,red] (25,54/6) circle [radius=0.2] ;
\draw[fill,red] (26,54/6) circle [radius=0.2] ;
\draw[fill,red] (27,60/6) circle [radius=0.2] ;
\draw[fill,red] (28,66/6) circle [radius=0.2] ;
\draw[fill,red] (29,66/6) circle [radius=0.2] ;
\draw[fill,red] (30,72/6) circle [radius=0.2] ;
\draw[fill,red] (31,78/6) circle [radius=0.2] ;
\draw[fill,red] (32,78/6) circle [radius=0.2] ;
\draw[fill,red] (33,84/6) circle [radius=0.2] ;
\draw[fill,red] (34,90/6) circle [radius=0.2] ;
\draw[fill,red] (35,90/6) circle [radius=0.2] ;
\draw[fill,red] (36,96/6) circle [radius=0.2] ;
\draw[fill,red] (37,102/6) circle [radius=0.2] ;
\draw[fill,red] (38,102/6) circle [radius=0.2] ;
\draw[fill,red] (39,108/6) circle [radius=0.2] ;
\draw[fill,red] (40,114/6) circle [radius=0.2] ;
\draw[fill,red] (41,114/6) circle [radius=0.2] ;
\draw[fill,red] (42,120/6) circle [radius=0.2] ;
\draw[fill,red] (43,126/6) circle [radius=0.2] ;
\draw[fill,red] (44,126/6) circle [radius=0.2] ;
\draw[fill,red] (45,132/6) circle [radius=0.2] ;
\draw[fill,red] (46,138/6) circle [radius=0.2] ;
\draw[fill,red] (47,138/6) circle [radius=0.2] ;

\draw[fill,red] (16,4) circle [radius=0.2] ;
\draw[fill,red] (17,5) circle [radius=0.2] ;
\draw[fill,red] (18,5) circle [radius=0.2] ;
\draw[fill,red] (19,6) circle [radius=0.2] ;
\draw[fill,red] (20,6) circle [radius=0.2] ;
\draw[fill,red] (21,7) circle [radius=0.2] ;
\draw[fill,red] (22,7) circle [radius=0.2] ;
\draw[fill,red] (23,8) circle [radius=0.2] ;

\draw[fill,red] (48/3,48/12) circle [radius=0.2] ;
\draw[fill,red] (48/6,1) circle [radius=0.2] ;
\draw [dashed] (16,0) -- (16,48/12);
\draw [dashed] (0,48/12) -- (16,48/12);
\draw [dashed] (12,0) -- (12,3);
\draw [dashed] (0,3) -- (12,3);
\draw [dashed] (8,0) -- (8,1);
\draw [dashed] (0,1) -- (8,1);

\draw[fill,red] (12,3) circle [radius=0.2] ;
\draw[fill,red] (13,3) circle [radius=0.2] ;
\draw[fill,red] (14,3) circle [radius=0.2] ;
\draw[fill,red] (15,4) circle [radius=0.2] ;

\draw[fill,red] (9,2) circle [radius=0.2] ;
\draw[fill,red] (10,2) circle [radius=0.2] ;
\draw[fill,red] (11,2) circle [radius=0.2] ;
\draw[fill,red] (12,3) circle [radius=0.2] ;

\draw[thick] (3,1) circle [radius=0.3] ;
\draw[thick] (6,2) circle [radius=0.3] ;
\draw[thick] (7,2) circle [radius=0.3] ;
\draw[thick] (8,2) circle [radius=0.3] ;
\draw[thick] (9,3) circle [radius=0.3] ;
\draw[thick] (10,3) circle [radius=0.3] ;
\draw[thick] (11,3) circle [radius=0.3] ;
\draw[thick] (12,4) circle [radius=0.3] ;
\draw[thick] (13,4) circle [radius=0.3] ;
\draw[thick] (14,4) circle [radius=0.3] ;
\draw[thick] (15,5) circle [radius=0.3] ;
\draw[thick] (16,5) circle [radius=0.3] ;
\draw[thick] (18,6) circle [radius=0.3] ;
\node at (24,-3.5) {Figure 4: Graph of the sequence $(H_n)_{n\in[2,N]}$ for $N=48$.};
\end{tikzpicture}
\end{center}

\begin{proof}
Let $n\in A\cap \left(1+\frac{2\lfloor N/2\rfloor}k,1+\frac{2\lfloor N/2\rfloor}{k-1}\right]$ be such that $gcd(A_n)=1$. 
We have $kn> N$. \\
We define $c:=\min\{u\in [0,N]\,:\, [u,N]\subset A\}$ as in Lemma \ref{lem_stab_disc}.\\
Let $\ell\ge 1$ be the largest integer such that $(\ell-1) n<c$ (since $0<c\le N<kn$, $\ell$ exists and $1\le \ell\le k$). 
Since $\ell n\ge c$, we have $|A_{\ell n}|=|A|-(N-\ell n)=|A|+\ell n-N$. 
According to Lemma \ref{lem_stab_disc}, we have
 \begin{align*}
 |A_n|&\le\frac{2(\ell-1)}\ell+\frac{2}{\ell(\ell+1)}\left(|A|+\ell n-N\right)\\
 &\le 2-\frac{2}\ell+\frac{2}{\ell(\ell+1)}\left(\left\lfloor \frac{N+1}2\right\rfloor+1+\ell n-N\right)\\
& \le 2+\frac1{\ell+1}\left(2(n-1)-\frac2\ell\left(N-\left\lfloor \frac{N+1}2\right\rfloor\right)\right)\\
& \le 2+\frac1{\ell+1}\left(2(n-1)-\frac2\ell\left\lfloor \frac{N}2\right\rfloor\right)\\
& \le 2+u_\ell (n-1,2\lfloor N/2\rfloor)
 \end{align*}
 where $(u_i)_i$ is the sequence defined in \eqref{def_ui}. \\
 Using $u_k(x,d)=\max_{i}u_i(x,d)$ if 
 $(k-1)x\le d<kx$, we get 
 $$|A_n|\le 2+\frac1{k+1}\left(2(n-1)-\frac2{k}\left\lfloor \frac{N}2\right\rfloor\right)=2+2\lfloor N/2\rfloor h\left(\frac{n-1}{2\lfloor N/2\rfloor}\right),$$
 where $h$ is the function defined in Theorem \ref{thmstabA},
 and $|A_n|$ being integer, we finally get
 $|A_n|-1\le H_n$.

If $n\in [1,N]\cap A^c$ and $n'=\max(A_n)$ (thus $n'\le n-1$), we have $A_n=A_{n'}$. The function $h$ being increasing, 
$(H_n)_{n\in[1,N]}$ is nondecreasing and 
$$|A_n|-1 =|A_{n'}|-1\le H_{n'}\le H_n.$$
The inequality is proved in any case.
\bigskip

This upper bound is sharp as we prove now.\\
Let $n\in [2,N]$ be fixed and $k\in \N$ be such that $n\in \left(1+\frac2k\lfloor N/2\rfloor,1+\frac2{k-1}\lfloor N/2\rfloor\right]$. Assume that $H_n\ge 2$ (otherwise $gcd(A_n)\not=1$).\\
By definition of $H_n$,
$H_n-1\le\frac1{k+1}\left(2(n-1)-\frac2k\lfloor N/2\rfloor\right)=u_k$.\\
Assume first that $k\ge 3$, i.e. $n\le 1+\lfloor N/2\rfloor $, then $u_1=n-1-\lfloor N/2\rfloor<H_n-1$.\\ 
Since $(u_i)_{1\le i\le k}$ is nondecreasing, we can fix $\ell\in[2,k]$ minimal such that $u_{\ell-1}< H_n-1\le u_\ell$.
We define the set
\begin{equation}\label{an}
A=\bigcup_{i=0}^{\ell-1}\left[i\left(n-H_n+1\right),in\right]\bigcup\left[\frac{\ell(\ell-1)}2(H_n-1)+\ell+\left\lfloor \frac{N}2\right\rfloor ,N\right].
\end{equation}
We have $|A_n|=1+H_n$.\\
Since $H_n-1\le u_\ell$, we have $\ell(n-H_n+1)\ge\frac{\ell(\ell-1)}2(H_n-1)+\ell+\left\lfloor \frac{N}2\right\rfloor$ thus\\ 
$(A+A)\cap[0,N]=A$.\\
Since $H_n-1>u_{\ell-1}$, we have $(\ell-1) n<\frac{\ell(\ell-1)}2(H_n-1)+\ell+\left\lfloor \frac{N}2\right\rfloor$.\\ Furthermore, 
$n-1\le\frac2{k-1}\lfloor N/2\rfloor$, thus $2\lfloor N/2\rfloor\ge(n-1)(k-1)\ge(n-1)(\ell-1)$ and 
$$H_n-1\le u_\ell=\frac{1}{\ell+1}\left(2(n-1)-\frac2\ell\lfloor N/2\rfloor\right)\le \frac{n-1}{\ell+1}\left(2-\frac{\ell-1}{\ell}\right)=\frac{n-1}\ell\le\frac{n-1}{\ell-1},$$ 
which yields $(\ell-1)(n-H_n+1)>(\ell-2)n$ and 
\begin{align*}
|A|&=\sum_{i=0}^{\ell-1}\left(i(H_n-1)+1\right)+N-\frac{\ell(\ell-1)}2(H_n-1)-\ell-\left\lfloor \frac{N}2\right\rfloor+1 \\
&=N-\left\lfloor \frac{N}2\right\rfloor+1 =1+\left\lfloor \frac{N+1}2\right\rfloor.
\end{align*}
This set verifies all conditions of Theorem \ref{stabAdisc} and is such that $|A_n|=H_n+1$.\\
If $k=2$, then $H_n-1=u_1+\left\lfloor\frac13(2\lfloor N/2\rfloor-(n-1))\right\rfloor\ge u_1=n-1-\lfloor N/2\rfloor$.\\
In this case, we define
$$A=\{0\}\cup \left[\left(n-H_n+1\right),n\right]\cup\left[(H_n-1)+2+\left\lfloor \frac{N}2\right\rfloor ,N\right],$$
which also verifies all conditions of Theorem \ref{stabAdisc} and is such that $|A_n|=H_n+1$.\\
\end{proof}

\begin{rem}
In this discrete case, for a fixed diameter $N$ and a fixed $n\in [2,N]$ we may have several left-stable sets in normal form of diameter $N$ satisfying $|A_n|=H_n+1$ and there may exists some set $A$ such that $|A_n|=H_n$ and $|A_m|=H_m$ with $m\not=n$.\\
For example, if $H_n=H_{n-1}$,  and $A(n)$ is the set defined by \eqref{an}, $A(n-1)$ the set defined by \eqref{an} replacing $n$ by $n-1$ and $k$ by $k'$ such that $n-1\in \left(1+\frac2{k'}\lfloor N/2\rfloor,1+\frac2{k'-1}\lfloor N/2\rfloor\right]$, we have
$$|A(n)_{n}|=H_{n}=H_{n-1}=|A(n-1)_{n-1}|=|A(n-1)_{n}| \, \mbox{ since }n\not\in A(n-1).$$
\\
We illustrate these phenomenons with the following example.\\
If $N=48$ and $n=13$, then $H_{13}=H_{12}=3$ and the sets
$$A=A(13)=\{0\}\cup[11,13]\cup [22,26] \cup [33,48]$$
and
$$B=A(12)=\{0\}\cup[10,12]\cup [20,24] \cup [30,36]\cup [40,48]$$
both satisfy all conditions of Theorem \ref{stabAdisc} and are such that $|A_{13}|=|B_{13}|=H_{13}+1$.\\

\end{rem}

In the following figure, we represent (for $N=48$) with black points the sequence $(H_n)_{2\le n\le 48}$, with blue points the sequence $(|A_n|-1)_{1\le n\le 48}$ and under the graph the set $A$, with purple circles the sequence $(|B_n|-1)_{1\le n\le 48}$ and under the graph the set $B$.
\begin{center}
\begin{tikzpicture}[xscale=0.25,yscale=0.25]
\draw [<->] (0,27) -- (0,0) -- (50,0);
\node [below left] at (0,0) {\small $0$};
\draw [] (0,0) -- (48,24);
\draw [dashed] (0,24) -- (48,24);
\node [left] at (0,24) {\small $N/2=24$};
\draw [dashed] (48,0) -- (48,24);
\node [below] at (48,0) {\small $N=48$};
\draw [dashed] (13,0) -- (13,3);
\node [below] at (13,0) {\small $n=13$};

\draw[fill]  (48,24) circle [radius=0.1] ;
\draw[fill]  (24,48/6) circle [radius=0.1] ;
\draw[fill]  (48/3,48/12) circle [radius=0.1] ;
\draw[fill]  (48/4,3) circle [radius=0.1] ;
\draw[fill]  (48/6,1) circle [radius=0.1] ;
\draw[fill]  (2,1) circle [radius=0.1] ;
\draw[fill]  (3,1) circle [radius=0.1] ;
\draw[fill]  (4,1) circle [radius=0.1] ;
\draw[fill]  (5,1) circle [radius=0.1] ;
\draw[fill]  (6,1) circle [radius=0.1] ;
\draw[fill]  (7,1) circle [radius=0.1] ;
\node[above] at (26,10) {\small $H_n$} ;

\draw[fill]  (25,54/6) circle [radius=0.1] ;
\draw[fill]  (26,54/6) circle [radius=0.1] ;
\draw[fill]  (27,60/6) circle [radius=0.1] ;
\draw[fill]  (28,66/6) circle [radius=0.1] ;
\draw[fill]  (29,66/6) circle [radius=0.1] ;
\draw[fill]  (30,72/6) circle [radius=0.1] ;
\draw[fill]  (31,78/6) circle [radius=0.1] ;
\draw[fill]  (32,78/6) circle [radius=0.1] ;
\draw[fill]  (33,84/6) circle [radius=0.1] ;
\draw[fill]  (34,90/6) circle [radius=0.1] ;
\draw[fill]  (35,90/6) circle [radius=0.1] ;
\draw[fill]  (36,96/6) circle [radius=0.1] ;
\draw[fill]  (37,102/6) circle [radius=0.1] ;
\draw[fill]  (38,102/6) circle [radius=0.1] ;
\draw[fill]  (39,108/6) circle [radius=0.1] ;
\draw[fill]  (40,114/6) circle [radius=0.1] ;
\draw[fill]  (41,114/6) circle [radius=0.1] ;
\draw[fill]  (42,120/6) circle [radius=0.1] ;
\draw[fill]  (43,126/6) circle [radius=0.1] ;
\draw[fill]  (44,126/6) circle [radius=0.1] ;
\draw[fill]  (45,132/6) circle [radius=0.1] ;
\draw[fill]  (46,138/6) circle [radius=0.1] ;
\draw[fill]  (47,138/6) circle [radius=0.1] ;

\draw[fill]  (16,4) circle [radius=0.1] ;
\draw[fill]  (17,5) circle [radius=0.1] ;
\draw[fill]  (18,5) circle [radius=0.1] ;
\draw[fill]  (19,6) circle [radius=0.1] ;
\draw[fill]  (20,6) circle [radius=0.1] ;
\draw[fill]  (21,7) circle [radius=0.1] ;
\draw[fill]  (22,7) circle [radius=0.1] ;
\draw[fill]  (23,8) circle [radius=0.1] ;

\draw[fill]  (48/3,48/12) circle [radius=0.1] ;
\draw[fill]  (48/6,1) circle [radius=0.1] ;

\draw[fill]  (12,3) circle [radius=0.1] ;
\draw[fill]  (13,3) circle [radius=0.1] ;
\draw[fill]  (14,3) circle [radius=0.1] ;
\draw[fill]  (15,4) circle [radius=0.1] ;

\draw[fill]  (9,2) circle [radius=0.1] ;
\draw[fill]  (10,2) circle [radius=0.1] ;
\draw[fill]  (11,2) circle [radius=0.1] ;
\draw[fill]  (12,3) circle [radius=0.1] ;

\draw[fill, purple] (0,-5) circle [radius=0.1] ;
\draw[fill, purple]  (10,-5) circle [radius=0.1] ;
\draw[fill, purple]  (11,-5) circle [radius=0.1] ;
\draw[fill, purple]  (12,-5) circle [radius=0.1] ;
\draw[fill, purple]  (20,-5) circle [radius=0.1] ;
\draw[fill, purple]  (21,-5) circle [radius=0.1] ;
\draw[fill, purple]  (22,-5) circle [radius=0.1] ;
\draw[fill, purple]  (23,-5) circle [radius=0.1] ;
\draw[fill, purple]  (24,-5) circle [radius=0.1] ;
\draw[fill, purple]  (30,-5) circle [radius=0.1] ;
\draw[fill, purple]  (31,-5) circle [radius=0.1] ;
\draw[fill, purple]  (32,-5) circle [radius=0.1] ;
\draw[fill, purple]  (33,-5) circle [radius=0.1] ;
\draw[fill, purple]  (34,-5) circle [radius=0.1] ;
\draw[fill, purple]  (35,-5) circle [radius=0.1] ;
\draw[fill, purple]  (36,-5) circle [radius=0.1] ;
\draw[fill, purple]  (40,-5) circle [radius=0.1] ;
\draw[fill, purple]  (41,-5) circle [radius=0.1] ;
\draw[fill, purple]  (42,-5) circle [radius=0.1] ;
\draw[fill, purple]  (43,-5) circle [radius=0.1] ;
\draw[fill, purple]  (44,-5) circle [radius=0.1] ;
\draw[fill, purple]  (45,-5) circle [radius=0.1] ;
\draw[fill, purple]  (46,-5) circle [radius=0.1] ;
\draw[fill, purple]  (47,-5) circle [radius=0.1] ;
\draw[fill, purple]  (48,-5) circle [radius=0.1] ;

\draw[thick, purple] (1,0) circle [radius=0.25] ;
\draw[thick, purple] (2,0) circle [radius=0.25] ;
\draw[thick, purple] (3,0) circle [radius=0.25] ;
\draw[thick, purple] (4,0) circle [radius=0.25] ;
\draw[thick, purple] (5,0) circle [radius=0.25] ;
\draw[thick, purple] (6,0) circle [radius=0.25] ;
\draw[thick, purple] (7,0) circle [radius=0.25] ;
\draw[thick, purple] (8,0) circle [radius=0.25] ;
\draw[thick,purple] (9,0) circle [radius=0.25] ;
\draw[thick,purple] (10,1) circle [radius=0.25] ;
\draw[thick,purple] (11,2) circle [radius=0.25] ;
\draw[thick,purple] (12,3) circle [radius=0.25] ;
\draw[thick, purple] (13,3) circle [radius=0.25] ;
\draw[thick, purple] (14,3) circle [radius=0.25] ;
\draw[thick, purple] (15,3) circle [radius=0.25] ;
\draw[thick, purple] (16,3) circle [radius=0.25] ;
\draw[thick, purple] (17,3) circle [radius=0.25] ;
\draw[thick, purple] (18,3) circle [radius=0.25] ;
\draw[thick, purple] (19,3) circle [radius=0.25] ;
\draw[thick, purple] (20,4) circle [radius=0.25] ;
\draw[thick, purple] (21,5) circle [radius=0.25] ;
\draw[thick, purple] (22,6) circle [radius=0.25] ;
\draw[thick, purple] (23,7) circle [radius=0.25] ;
\draw[thick, purple] (24,8) circle [radius=0.25] ;
\draw[thick, purple] (25,8) circle [radius=0.25] ;
\draw[thick, purple] (26,8) circle [radius=0.25] ;
\draw[thick, purple] (27,8) circle [radius=0.25] ;
\draw[thick, purple] (28,8) circle [radius=0.25] ;
\draw[thick, purple] (29,8) circle [radius=0.25] ;
\draw[thick, purple] (30,9) circle [radius=0.25] ;
\node [right, purple] at (25,6) {\small $|B_n|-1$};
\draw[thick, purple]  (31,10) circle [radius=0.25]  ;
\draw[thick, purple] (32,11) circle [radius=0.25]  ;
\draw[thick, purple] (33,12) circle [radius=0.25]  ;
\draw[thick, purple] (34,13) circle [radius=0.25]  ;
\draw[thick, purple] (35,14) circle [radius=0.25]  ;
\draw[thick, purple]  (36,15) circle [radius=0.25]  ;
\draw[thick, purple]  (37,15) circle [radius=0.25]  ;
\draw[thick, purple]  (38,15) circle [radius=0.25]  ;
\draw[thick, purple]  (39,15) circle [radius=0.25]  ;
\draw[thick, purple]  (40,16) circle [radius=0.25]  ;
\draw[thick, purple]  (41,17) circle [radius=0.25]  ;
\draw[thick, purple]  (42,18) circle [radius=0.25]  ;
\draw[thick, purple]  (43,19) circle [radius=0.25]  ;
\draw[thick, purple]  (44,20) circle [radius=0.25]  ;
\draw[thick, purple]  (45,21) circle [radius=0.25]  ;
\draw[thick, purple]  (46,22) circle [radius=0.25]  ;
\draw[thick, purple]  (47,23) circle [radius=0.25]  ;
\draw[thick, purple]  (48,24) circle [radius=0.25]  ;

\draw[thick, purple]  (11,-5) circle [radius=0.1] ;
\draw[thick, purple]  (12,-5) circle [radius=0.1] ;
\draw[thick, purple]  (20,-5) circle [radius=0.1] ;
\draw[thick, purple]  (21,-5) circle [radius=0.1] ;
\draw[thick, purple]  (22,-5) circle [radius=0.1] ;
\draw[thick, purple]  (23,-5) circle [radius=0.1] ;
\draw[thick, purple]  (24,-5) circle [radius=0.1] ;
\draw[thick, purple]  (30,-5) circle [radius=0.1] ;
\draw[thick, purple]  (31,-5) circle [radius=0.1] ;
\draw[thick, purple]  (32,-5) circle [radius=0.1] ;
\draw[thick, purple]  (33,-5) circle [radius=0.1] ;
\draw[thick, purple]  (34,-5) circle [radius=0.1] ;
\draw[thick, purple]  (35,-5) circle [radius=0.1] ;
\draw[thick, purple]  (36,-5) circle [radius=0.1] ;
\draw[thick, purple]  (40,-5) circle [radius=0.1] ;
\draw[thick, purple]  (41,-5) circle [radius=0.1] ;
\draw[thick, purple]  (42,-5) circle [radius=0.1] ;
\draw[thick, purple]  (43,-5) circle [radius=0.1] ;
\draw[thick, purple]  (44,-5) circle [radius=0.1] ;
\draw[thick, purple]  (45,-5) circle [radius=0.1] ;
\draw[thick, purple]  (46,-5) circle [radius=0.1] ;
\draw[thick, purple]  (47,-5) circle [radius=0.1] ;
\draw[thick, purple]  (48,-5) circle [radius=0.1] ;

\node [right, purple] at (49,-5) {$B$};
\node [right, blue] at (49,-3) {$A$};

\draw[fill,blue] (0,-3) circle [radius=0.1] ;
\draw[fill,blue] (11,-3) circle [radius=0.1] ;
\draw[fill,blue] (12,-3) circle [radius=0.1] ;
\draw[fill,blue] (13,-3) circle [radius=0.1] ;
\draw[fill,blue] (22,-3) circle [radius=0.1] ;
\draw[fill,blue] (23,-3) circle [radius=0.1] ;
\draw[fill,blue] (24,-3) circle [radius=0.1] ;
\draw[fill,blue] (25,-3) circle [radius=0.1] ;
\draw[fill,blue] (26,-3) circle [radius=0.1] ;
\draw[fill,blue] (33,-3) circle [radius=0.1] ;
\draw[fill,blue] (34,-3) circle [radius=0.1] ;
\draw[fill,blue] (35,-3) circle [radius=0.1] ;
\draw[fill,blue] (36,-3) circle [radius=0.1] ;
\draw[fill,blue] (37,-3) circle [radius=0.1] ;
\draw[fill,blue] (38,-3) circle [radius=0.1] ;
\draw[fill,blue] (39,-3) circle [radius=0.1] ;
\draw[fill,blue] (40,-3) circle [radius=0.1] ;
\draw[fill,blue] (41,-3) circle [radius=0.1] ;
\draw[fill,blue] (42,-3) circle [radius=0.1] ;
\draw[fill,blue] (43,-3) circle [radius=0.1] ;
\draw[fill,blue] (44,-3) circle [radius=0.1] ;
\draw[fill,blue] (45,-3) circle [radius=0.1] ;
\draw[fill,blue] (46,-3) circle [radius=0.1] ;
\draw[fill,blue] (47,-3) circle [radius=0.1] ;
\draw[fill,blue] (48,-3) circle [radius=0.1] ;
\node[blue] at (37,9) {\small $|A_n|-1$};
\draw[fill,blue] (1,0) circle [radius=0.1] ;
\draw[fill,blue] (2,0) circle [radius=0.1] ;
\draw[fill,blue] (3,0) circle [radius=0.1] ;
\draw[fill,blue] (4,0) circle [radius=0.1] ;
\draw[fill,blue] (5,0) circle [radius=0.1] ;
\draw[fill,blue] (6,0) circle [radius=0.1] ;
\draw[fill,blue] (7,0) circle [radius=0.1] ;
\draw[fill,blue] (8,0) circle [radius=0.1] ;
\draw[fill,blue] (9,0) circle [radius=0.1] ;
\draw[fill,blue] (10,0) circle [radius=0.1] ;
\draw[fill,blue] (11,1) circle [radius=0.1] ;
\draw[fill,blue] (12,2) circle [radius=0.1] ;
\draw[fill,blue] (13,3) circle [radius=0.1] ;
\draw[fill,blue] (14,3) circle [radius=0.1] ;
\draw[fill,blue] (15,3) circle [radius=0.1] ;
\draw[fill,blue] (16,3) circle [radius=0.1] ;
\draw[fill,blue] (17,3) circle [radius=0.1] ;
\draw[fill,blue] (18,3) circle [radius=0.1] ;
\draw[fill,blue] (19,3) circle [radius=0.1] ;
\draw[fill,blue] (20,3) circle [radius=0.1] ;
\draw[fill,blue] (21,3) circle [radius=0.1] ;
\draw[fill,blue] (22,4) circle [radius=0.1] ;
\draw[fill,blue] (23,5) circle [radius=0.1] ;
\draw[fill,blue] (24,6) circle [radius=0.1] ;
\draw[fill,blue] (25,7) circle [radius=0.1] ;
\draw[fill,blue] (26,8) circle [radius=0.1] ;
\draw[fill,blue] (27,8) circle [radius=0.1] ;
\draw[fill,blue] (28,8) circle [radius=0.1] ;
\draw[fill,blue] (29,8) circle [radius=0.1] ;
\draw[fill,blue] (30,8) circle [radius=0.1] ;
\draw[fill,blue] (31,8) circle [radius=0.1] ;
\draw[fill,blue] (32,8) circle [radius=0.1] ;
\draw[fill,blue] (33,9) circle [radius=0.1] ;
\draw[fill,blue] (34,10) circle [radius=0.1] ;
\draw[fill,blue] (35,11) circle [radius=0.1] ;
\draw[fill,blue] (36,12) circle [radius=0.1] ;
\draw[fill,blue] (37,13) circle [radius=0.1] ;
\draw[fill,blue] (38,14) circle [radius=0.1] ;
\draw[fill,blue] (39,15) circle [radius=0.1] ;
\draw[fill,blue] (40,16) circle [radius=0.1] ;
\draw[fill,blue] (41,17) circle [radius=0.1] ;
\draw[fill,blue] (42,18) circle [radius=0.1] ;
\draw[fill,blue] (43,19) circle [radius=0.1] ;
\draw[fill,blue] (44,20) circle [radius=0.1] ;
\draw[fill,blue] (45,21) circle [radius=0.1] ;
\draw[fill,blue] (46,22) circle [radius=0.1] ;
\draw[fill,blue] (47,23) circle [radius=0.1] ;
\draw[fill,blue] (48,24) circle [radius=0.1] ;

\node at (24,-10) {Figure 5: An example of two sets $A$ and $B$ satisfying $1+H_n=|A_n|=|B_n|$.};
\end{tikzpicture}
\end{center}

\bibliographystyle{alpha}
\bibliography{stable_sets_nov17}

\end{document}